\newtheorem{thm}{Theorem}[section]
\newtheorem{cor}[thm]{Corollary}
\newtheorem{prop}[thm]{Proposition}
\newtheorem{lemma}[thm]{Lemma}
\theoremstyle{definition}
\newtheorem{defn}[thm]{Definition}
\theoremstyle{remark}
\newcommand{\Q}{\mathbb{Q}}
\newcommand{\Z}{\mathbb{Z}}
\newcommand{\C}{\mathbb{C}}
\DeclareMathOperator{\Sol}{\mathcal{S}\mathit{ol}}
\DeclareMathOperator{\DR}{\mathcal{DR}}
\DeclareMathOperator{\cHom}{\mathcal{H}\mathit{om}}
\DeclareMathOperator{\gr}{gr}
\DeclareMathOperator{\cone}{Cone}
\DeclareMathOperator{\Perv}{Perv}
\DeclareMathOperator{\For}{For}
\newcommand{\cA}{\mathcal{A}}
\newcommand{\cB}{\mathcal{B}}
\newcommand{\cD}{\mathcal{D}}
\newcommand{\cM}{\mathcal{M}}
\newcommand{\cN}{\mathcal{N}}
\newcommand{\cO}{\mathcal{O}}
\begin{document}
\title{Period Integrals and Hodge Modules}
\author{Laure Flapan}
\address{Department of Mathematics, Massachusetts Institute of Technology, Cambridge, MA 02142}
\email{lflapan@mit.edu}

\author{Robin Walters}
\address{Department of Mathematics, Northeastern University, Boston, MA 02115} \email{r.walters@northeastern.edu}

\author{Xiaolei Zhao}
\address{Department of Mathematics, University of California Santa Barbara, Santa Barbara, CA 93106}
\email{xlzhao@ucsb.edu}

\subjclass[2010]{14D07, 14F10, 32C38, 32G20}
\keywords{period integral, variation of Hodge structures, Hodge module, filtered $D$-module}

\begin{abstract}
We define a map $\mathcal{P}_M$ attached to any polarized Hodge module $M$ such that the restriction of $\mathcal{P}_M$ to a locus on which $M$ is a variation of Hodge structures  induces the usual period integral pairing for this variation of Hodge structures. In the case that $M$ is the minimal extension of a simple polarized variation of Hodge structures $V$, we show that the homotopy image of $\mathcal{P}_M$ is the minimal extension of the graph morphism of the usual period integral map for $V$.
\end{abstract}
\maketitle

\section{Introduction}
Classically, the period integrals of a family $f\colon \mathscr{X}\rightarrow B$ of smooth projective $m$-dimensional varieties are obtained by choosing a symplectic basis $\gamma_1, \ldots, \gamma_r$ of $H_m(X_b,\Z)$ valid in some open neighborhood $U\subset B$ of $b\in B$ and then computing the values
$\int_{\gamma_i}\omega_{b}$
where $\omega_b\in H^0(X_b,\Omega_{X_b}^1)$. 

In the case that $X_b$ is a smooth algebraic curve, then, via the Torelli theorem, the curve $X_b$ is exactly determined by the values of its period integrals. More generally, if $V$ is a variation of Hodge structures of weight $w$ on a complex algebraic variety $B$ one can, as above, define a period pairing
\begin{equation}\label{eq: VHS pairing defn}
V_\Q\otimes F^wV_\C\rightarrow \mathcal{O}_B, \hspace{.2in}
\langle \gamma, \omega\rangle \mapsto \int_{\gamma}\omega.
\end{equation}

The asymptotic behavior of the periods as a smooth variety degenerates can sometimes contain significant information about the degeneration itself and the resulting singular variety. For instance in the case of a family of smooth algebraic curves $X_b$ degenerating to a singular fiber $X_0$, the limit of the periods of the $X_b$ as $b$ approaches $0$ are the integrals of the normalization of $X_0$. In fact the singular fiber $X_0$ is completely determined by these limit periods \cite[pg 35--36]{carlson}. However, classically there is not a good notion of the periods of singular varieties, making it hard to study periods of families of projective varieties where not all fibers are smooth. 

This stems from the fact that classical Hodge theory, and hence the classical study of periods, provides a framework within which to study either the cohomology of a family of smooth projective varieties, which it endows with a variation of Hodge structure, or the cohomology of a single complex algebraic variety, which it endows with a mixed Hodge structure. Saito's theory of Hodge modules, introduced in \cite{saito} and \cite{saito2}, provides a framework to study the cohomology of arbitrary families of complex algebraic varieties, in particular without requiring smoothness of the fibers, by uniting Hodge theory with the theories of perverse sheaves and $\mathcal{D}$-modules. For an introduction to Saito's theory, see for instance \cite{schnellnotes}.

The goal of this paper is to  formalize a notion of period integrals for (pure) Hodge modules.
\begin{defn}
 If $M$ is a polarized pure Hodge module of weight $w$ on a complex algebraic variety $B$ of dimension $d$, the 
 \textit{period integral map} $\mathcal{P}_M$ for the polarized Hodge module $M$ is a map (constructed in Definition \ref{def: PI map HM})
 \[\mathcal{P}_M\colon K\rightarrow R\cHom_{\cO_B}(\omega_B^{-1}\otimes_{\mathcal{O}_B}F_{-w}\mathcal{M},\cO_B)[d],\]
 where $K$ is the rational perverse sheaf underlying $M$ and $\mathcal{M}$ is the underlying regular holonomic $\mathcal{D}$-module equipped with filtration $F_\bullet$.
\end{defn}
 From this period integral map $\mathcal{P}_M$ we construct an isomorphism of perverse sheaves 
\[\gamma_M\colon K\rightarrow L[-1]\]
 through which $\mathcal{P}_M$ factors, where the perverse sheaf $L$ is the homotopy image of $\mathcal{P}_M$, meaning that $L$ is the mapping cone of the natural map \[R\cHom_{\cO_B}(\omega_B^{-1}\otimes_{\mathcal{O}_B}F_{-w}\mathcal{M},\cO_B)[d]\rightarrow \cone(\mathcal{P}_M).\]

In order to see that $\mathcal{P}_M$ together with $\gamma_M$ provide the correct generalization of the period pairing for a polarized variation of Hodge structures $V$, we consider the pairing \eqref{eq: VHS pairing defn} as a morphism
\[\mathbb{P}_{V}\colon \mathbb{V}\rightarrow \cHom_{\mathcal{O}_B}(F^w\mathcal{V},\mathcal{O}_B),\]
where $\mathbb{V}$ is the local system of $\Q$-vector spaces on $B$ underlying $V$ and $\mathcal{V}=\mathbb{V}\otimes_\Q \mathcal{O}_B$ is the corresponding vector bundle equipped with the Hodge filtration $F^\bullet$. Then since any polarized variation of Hodge structures $V$ corresponds to a polarized Hodge module  $M_V$, we
 prove 

\begin{restatable*}{prop}{propVHS}\label{prop: VHS}
If  $M=M_V$ is a polarized Hodge module attached to some polarized variation of Hodge structures $V$, then $\mathcal{P}_M=\mathbb{P}_{V}[d]$.
\end{restatable*}

Recall that, as proven in \cite{saito}, any polarized variation of Hodge structures $V$ on an open subset $U\subset B$ of a complex algebraic variety extends uniquely to a polarized Hodge module $M$ on $B$ via the minimal extension functor $j_{!*}$. This minimal extension functor for Hodge modules is compatible with the minimal extension functors for the underlying perverse sheaves and $\mathcal{D}$-modules. 

We thus use Proposition \ref{prop: VHS} in order to prove 

\begin{restatable*}{thm}{thmMinExt}\label{prop: min ext}
Suppose that $M\in HM(B,w)$ is a polarized Hodge module on a $d$-dimensional complex algebraic variety $B$ such that $M=j_{!*}V$ for some simple polarized variation of Hodge structures $V$ on an open subset $j\colon U\hookrightarrow B$. Then the isomorphism $\gamma_M$ satisfies
\[\gamma_M=j_{!*}(\Gamma_{\mathbb{P}_V[d]}),\]
where $\Gamma_{\mathbb{P}_V[d]}[d]\colon \mathbb{V}[d]\rightarrow \mathbb{V}\oplus \cHom_{\mathcal{O}_U}(F^w\mathcal{V},\mathcal{O}_U)[d]$ is the graph morphism of the morphism $\mathbb{P}_V[d]$.
\end{restatable*}

We moreover study some of the properties of the Hodge module period integral map $\mathcal{P}_M$. We prove in Propositions \ref{prop: inclusion} and  \ref{prop: M decomp} that $\mathcal{P}_M$ behaves well with respect to  polarized morphisms and polarized direct sums respectively and consequently in Corollaries \ref{cor: gamma incl} and \ref{cor: gamma decomp} respectively that the same is true for $\gamma_M$. 

One of Saito's fundamental results about Hodge modules is the Structure Theorem \cite{saito2}, which asserts that 
any polarized Hodge module can be decomposed as a direct sum of minimal extensions of variations of Hodge structures. More precisely, if $M$ is a polarized pure Hodge module on a complex algebraic variety $B$, then $B$ admits a stratification into locally closed subvarieties $B_i$ such that 
\begin{equation}\label{eq: M structure intro}M=\bigoplus_i (j_{i})_{!*}V_{B_i},\end{equation}
where $V_{B_i}$ is a simple variation of Hodge structures on $B_i$ and $j_i\colon B_i\hookrightarrow \overline{B}_i$ is the natural inclusion of $B_i$ into its closure. Thus we deduce 

\begin{restatable*}{cor}{corStructure}\label{cor: structure}
Let $B$ be a complex algebraic variety and $M\in HM(B,w)$ a polarizable Hodge module on $B$ equipped with a decomposition of the form \eqref{eq: M structure intro} for some stratification $B=\bigsqcup B_i$, where $\dim B_i=d_i$. Then, endowing $M$ with the polarization $\phi_M=\bigoplus_i\phi_{M_i}$, where each $\phi_{M_i}$ is a chosen polarization of the Hodge module  $M_i=(j_{i})_{!*}V_{B_i}$, we have
\[\gamma_M=\bigoplus_i (j_{i})_{!*}\Gamma_{\mathbb{P}_{V_{B_i}}[d_i]}.\]
\end{restatable*}

 Lastly, we examine the period integral map $\mathcal{P}_M$ in some specific cases where one has additional information about the weight $w$ Hodge module $M$ and, more importantly, its filtered piece $F_{-w}\mathcal{M}$. 

\subsection{The case of a one-dimensional base} The first such instance we explore is the case when the base $B$ for the Hodge module has dimension $1$ and $M$ is the minimal extension of a variation of Hodge structures $V$ on an open subset $U\subset B$. In this case, the nearby cycles functor $\Psi_f M$ applied to $M$, where $f\colon B \rightarrow \C$ is some non-constant holomorphic function, coincides with Deligne's limit mixed Hodge structure $L_fV$ at a point $P=f^{-1}(0)$. Hence, as we formalize in  Proposition \ref{prop: curve}, the period integral maps on the weight filtration $W(N_u)$-graded pieces of $\Psi_f M$ and $L_fV$ coincide. 

\subsubsection{One-dimensional families of curves} In the case that the variation of Hodge structures $V$ arises from the cohomology of a family $\mathscr{X}\rightarrow B$ of smooth algebraic curves, we then deduce in Section \ref{sec: curves} that 
\[\mathcal{P}_{\gr^{W(N_u)}_{1}\Psi_fM[-1]}=\bigoplus_i\mathbb{P}_{H^1(X_i)},\]
where the $X_i$ are the irreducible components of the normalization of the singular curve above the point $P\in B$. 

\subsubsection{Mildly singular degenerations} Additionally, in the case of a family of smooth varieties on a one-dimensional base with only mildly singular degenerations, recent results of Kerr--Laza \cite{KL} relate the first graded piece of the Hodge filtration on the limit mixed Hodge structure to the cohomology of the singular fibers. Hence under the hypotheses of Kerr--Laza \cite[Theorem 1.2]{KL}, we combine Proposition \ref{prop: curve} with the fact, proved in Proposition \ref{prop: inclusion}, that $\mathcal{P}_M$ behaves well with respect to polarized morphisms to 
 conclude in 
 Proposition \ref{prop: kerr-laza} that 
\[\mathcal{P}_{\gr^{W(N_u)}_{\ell-w+1}i^*M}=
\mathcal{P}_{\gr^{W(N_u)}_{\ell-w+1}\Psi_fM[-1]}\]
for all $\ell \in \mathbb{Z}$, where $i\colon P\hookrightarrow B$ is the natural inclusion.

\subsection{Hypersurfaces} The second instance we consider where one has additional information about the Hodge module $M$ and its filtered piece $F_{-w}\mathcal{M}$ is the case when $M$ is the minimal extension of the variation of Hodge structures arising from the vanishing cohomology of a family of hypersurfaces on a complex projective space $X$ determined by the choice of an ample line bundle $\mathcal{L}$ on $X$. Such Hodge modules $M$ were studied in detail by  Schnell in \cite{schnell2012residues}, who proved that such an $M$ has the property that $F_{-w}M$ is an ample vector bundle \cite[Theorem D (1)]{schnell2012residues}. We thus deduce in Section \ref{sec: schnell} that for such a Hodge module $M$ on a $d$-dimensional base $B$, the period integral map $\mathcal{P}_M$ induces, much as in the variation of Hodge structures case \eqref{eq: VHS pairing defn}, an honest \textit{period pairing}
\[K\otimes_\Q (\omega_B^{-1}\otimes_{\mathcal{O}_B}F_{-w} \cM)\rightarrow \cO_B[d].\]
The image of this pairing then consists precisely of the period integrals of the hypersurfaces in $X$. 

In \cite[Theorem 1.4]{HLZ}, Huang--Lian--Zhu prove, under the additional assumption that $X$ is a homogenenous space of a semisimple group, that the period integrals of the \textit{smooth} such hypersurfaces can be characterized as the solution space of a particular $\mathcal{D}$-module. Our morphism of perverse sheaves $\gamma_M\colon K\rightarrow L[-1]$ provides a characterization of the period integrals of \textit{all} such hypersurfaces. 
Under the Riemann--Hilbert correspondence between perverse sheaves and regular holonomic $\mathcal{D}$-modules, there should exist a $\mathcal{D}$-module whose solutions yield exactly this perverse sheaf $L[-1]$. It would be interesting to relate this $\mathcal{D}$-module to the one constructed by Huang--Lian--Zhu.

\subsection{Structure of the paper.} The structure of the paper is as follows. 

In Section \ref{sec: VHS}, we detail the definition of period integrals for variations of Hodge structures to make it most easily generalizable to the Hodge module setting. In doing so, we define the period integral map $\mathbb{P}_V$ for polarized variations of Hodge structures $V$. Then in Section \ref{sec: HM}, we detail our definition (Definition \ref{def: PI map HM}) of the period integral map $\mathcal{P}_M$ for polarized Hodge modules $M$. 

In Section \ref{sec: rest to VHS}, we examine the case that $M$ corresponds to a variation of Hodge structures $V$, proving in Proposition \ref{prop: VHS} that in this case $\mathcal{P}_M=\mathbb{P}_V[d]$. Section \ref{sec: min ext} then focuses on the case that $M$ is the minimal extension of a variation of Hodge structures $V$. Theorem \ref{prop: min ext} proves that, in this case, the morphism $\gamma_M$ obtained from the period integral map $\mathcal{P}_M$ is the minimal extension functor applied to the graph morphism of the variation of Hodge structure period integral map $\mathbb{P}_V$, shifted by $d$ the dimension of the base.

Sections \ref{sec: naturality} and \ref{sec: structure} develop some of the properties of the period integral map $\mathcal{P}_M$.  In particular, Proposition \ref{prop: inclusion} establishes the interaction between the period integral map for Hodge modules and polarized morphisms of Hodge modules. Proposition \ref{prop: M decomp} shows that $\mathcal{P}_M$ behaves well with respect to polarized direct sums. From these, Corollary \ref{cor: structure} relates $\mathcal{P}_M$ to Saito's Structure Theorem for pure Hodge modules. 

Lastly, Sections \ref{sec: 1 dim} and \ref{sec: schnell} detail the examples of one-dimensional families and families of hypersurfaces  respectively discussed above. In particular, Section \ref{sec: curves} examines  one-dimensional families of algebraic curves. Section \ref{sec: KL} examines one-dimensional families of projective varieties that have
mildly singular degenerations, with Proposition \ref{prop: kerr-laza} providing the application of Kerr--Laza's results in \cite{KL} to our setting.

\section{Period integrals for variations of Hodge structures}\label{sec: VHS}
Recall that a polarized variation of Hodge structures $V=(\mathcal{V}, \nabla, F^\bullet \mathcal{V}, \mathbb{V}, Q)$ of pure weight $w$ on a complex manifold $U$ of dimension $d$ consists of the data:
\begin{enumerate}
\item a holomorphic vector bundle $\mathcal{V}$ equipped with a flat connection $\nabla$,
\item a Hodge filtration $F^\bullet \mathcal{V}$ by holomorphic sub-bundles,
\item a local system $\mathbb{V}$ of finite-dimensional $\Q$-vector spaces,
\item a bilinear form $Q\colon \mathbb{V}\otimes_\Q \mathbb{V}\rightarrow \Q(-w)$, where $\Q(-w)=2\pi i \cdot \Q\subset \C$.
\end{enumerate}
The above are related by $\mathbb{V}\otimes_\Q \mathcal{O}_U=\mathcal{V}$ and $\mathbb{V}\otimes_\Q \C=\mathcal{V}^\nabla$, where $\Gamma(U,\mathcal{V}^\nabla)=\{v\in \Gamma(U,\mathcal{V})\mid \nabla v=0\}$. 

A variation of polarized Hodge structures $V=(\mathcal{V}, \nabla, F^\bullet \mathcal{V}, \mathbb{V}, Q)$  has the property that for every $u\in U$, the $\Q$-vector space $\mathbb{V}_u$ together with the filtration $F^\bullet \mathcal{V}_u$ define a $\Q$-Hodge structure polarized by the bilinear form $Q_u$. 

So let $V=(\mathcal{V}, \nabla, F^\bullet \mathcal{V}, \mathbb{V}, Q)$ be a variation of Hodge structures of pure weight $w$ on a complex manifold $U$ of dimension $d$. Note that we may view the polarization $Q$ as a map
\[Q\colon \mathbb{V}\rightarrow \mathbb{V}^\vee(-w),\]
where $\mathbb V^\vee$ denotes the local system dual to $\mathbb{V}$. Let 
$Q_{\mathcal{O}_U}\colon \mathcal{V}\rightarrow \cHom_{\mathcal{O}_U}(\mathcal{V}, \mathcal{O}_U)$ denote the extension of $Q$ to $\mathbb{V}\otimes_\Q \mathcal{O}_U=\mathcal{V}$. 

Now note that the embedding $\Q\hookrightarrow \C$ induces an inclusion
\begin{equation}\label{eq: s def}s\colon\mathbb{V}=\mathbb{V}\otimes_\Q \Q\hookrightarrow \mathbb{V}\otimes_\Q \C=\mathcal{V}^\nabla.\end{equation}

Additionally consider the canonical inclusions
\begin{equation}\label{eq: t eq}t\colon \mathcal{V}^\nabla \rightarrow \mathcal{V}\end{equation}
\begin{equation}\label{eq: i eq}i\colon F^w\mathcal{V}\hookrightarrow \mathcal{V}.\end{equation}

\begin{defn}\label{def: PI map VHS}
The \emph{period integral map} $\mathbb{P}_{V}$ for the polarized variation of Hodge structures $V$ is the composition
\[\mathbb{V}\xhookrightarrow{s}\mathcal{V}^\nabla\xhookrightarrow{t} \mathcal{V}\xrightarrow{Q_{\mathcal{O}_U}} \cHom_{\cO_U}(\mathcal{V},\mathcal{O}_U)\xrightarrow{i^\vee}\cHom_{\mathcal{O}_U}(F^w\mathcal{V},\mathcal{O}_U).\]
\end{defn}

We define the \emph{period integral pairing} for $V$ to be the induced pairing
\[\mathbb{V}\otimes_\Q F^w\mathcal{V}\rightarrow \mathcal{O}_U.\]

Observe that this period integral pairing for $V$ indeed recovers the pairing 
\begin{equation*}
V_\Q\otimes F^wV_\C\rightarrow \mathcal{O}_B, \hspace{.2in}
\langle \gamma, \omega\rangle \mapsto \int_{\gamma}\omega
\end{equation*}
introduced in \ref{eq: VHS pairing defn} in the introduction.


\section{Period integrals for Hodge modules}\label{sec: HM}

The goal of this section will be to define a period integral map for polarized Hodge modules analogous to the period integral map $\mathbb{P}_V$ for polarized varations of Hodge structures introduced in Section \ref{sec: VHS}. 

Recall that a polarized pure Hodge module $M=(\mathcal{M}, F_\bullet \mathcal{M}, K, \phi)$ of weight $w$ on a complex algebraic variety $B$ of dimension $d$ consists of the data:
\begin{enumerate}
\item a perverse sheaf $K$ over $\Q_B$,
\item a regular holonomic right $\mathcal{D}_X$-module $\mathcal{M}$ with an isomorphism
\[K\otimes_\Q \C \cong \DR(\mathcal{M}),\]
\item A good filtration $F_\bullet\mathcal{M}$ by $\mathcal{O}_B$-coherent subsheaves of $\mathcal{M}$ such that
\[F_p\mathcal{M}\cdot F_k\mathcal{D}\subset F_{p+k}\mathcal{M}\]
and such that the graded pieces $gr^F_\bullet \mathcal{M}$ are coherent over $gr^F_\bullet \mathcal{D}$,
\item A morphism $\phi\colon K(w)\rightarrow \mathbf{D}K$ extending to an isomorphism $ M(w)\rightarrow \mathbf{D}M$, where $\mathbf{D}K=R\cHom(K, \Q_B(d))[2d]$ denotes the Verdier dual of the perverse sheaf $K$.
\end{enumerate}
The properties that the above data need to satisfy in order for $M=(\mathcal{M}, F_\bullet \mathcal{M}, K, \phi)$ to actually be a polarized Hodge module can be found in \cite[Definition 12.5]{schnellnotes}.

We will begin by introducing a bit of notation. Let $\For$ denote the forgetful functor from the derived category of $\mathcal{D}_B$-modules to the derived category of $\mathcal{O}_B$-modules. 

Let us define another functor $H$ from the derived category of $\mathcal{O}_B$-modules to itself given by
\[
H(-) = R\cHom_{\cO_B}(\omega_B^{-1}\otimes_{\mathcal{O}_B}-, \cO_B).
\]
Therefore the composition $H\circ \For$ is a functor from the derived category of  $\mathcal{D}_B$-modules to the derived category of $\mathcal{O}_B$-modules.

Define the $\Sol$ functor for right $\mathcal{D}_B$-modules but with values in the left $\mathcal{D}_B$-module $\mathcal{O}_B$ by 
\[\Sol(-)=R\cHom_{\mathcal{D}_B}(\omega_B^{-1}\otimes_{\mathcal{O}_B}-,\mathcal{O}_B).\]
This functor $\Sol$ is a functor from the derived category of right $\mathcal{D}_B$-modules to the derived category of $\mathcal{O}_B$-modules.

\begin{lemma}\label{lem: eta lem}
There is a natural transformation $\eta$ from the functor $\Sol$ to $H\circ \For$ such that for any $\mathcal{D}_B$-module $\mathcal{N}$ the map 
\[\eta_\mathcal{N}\colon R\cHom_{\mathcal{D}_B}(\omega_B^{-1}\otimes_{\mathcal{O}_B}\mathcal{N},\mathcal{O}_B)\rightarrow R\cHom_{\cO_B}(\For(\omega_B^{-1}\otimes_{\mathcal{O}_B}\mathcal{N}),\cO_B)\] is a natural transformation.
\end{lemma}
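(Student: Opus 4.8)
The plan is to build $\eta$ as the derived incarnation of the tautological map that forgets $\mathcal{D}_B$-linearity. Recall that, since $\mathcal{N}$ is a right $\mathcal{D}_B$-module, the side-changing operation makes $\omega_B^{-1}\otimes_{\mathcal{O}_B}\mathcal{N}$ a \emph{left} $\mathcal{D}_B$-module, while $\mathcal{O}_B$ carries its standard left $\mathcal{D}_B$-module structure, so the $\mathcal{D}_B$-linear $\cHom$ defining $\Sol$ makes sense. The inclusion of sheaves of rings $\mathcal{O}_B\hookrightarrow\mathcal{D}_B$ makes $\For$ exact, and for any pair of left $\mathcal{D}_B$-modules $P,Q$ there is a tautological inclusion of $\mathcal{O}_B$-modules $\cHom_{\mathcal{D}_B}(P,Q)\hookrightarrow\cHom_{\mathcal{O}_B}(\For P,\For Q)$, natural in both variables, since a $\mathcal{D}_B$-linear map is in particular $\mathcal{O}_B$-linear. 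Specializing to $P=\omega_B^{-1}\otimes_{\mathcal{O}_B}\mathcal{N}$ and $Q=\mathcal{O}_B$, and using $\For(\omega_B^{-1}\otimes_{\mathcal{O}_B}\mathcal{N})=\omega_B^{-1}\otimes_{\mathcal{O}_B}\For(\mathcal{N})$ so that the target functor is exactly $H\circ\For$, this is the underived prototype of $\eta_{\mathcal{N}}$.

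To produce $\eta_{\mathcal{N}}$ on the derived category I would compute both functors with one and the same resolution. Choose a resolution $P^\bullet\to\omega_B^{-1}\otimes_{\mathcal{O}_B}\mathcal{N}$ by locally free $\mathcal{D}_B$-modules. The decisive observation is that $\mathcal{D}_B$ is locally free over $\mathcal{O}_B$ — being the increasing union of the locally free coherent sheaves $F_k\mathcal{D}_B$ — so each $P^i$ is simultaneously projective for $\cHom_{\mathcal{D}_B}(-,\mathcal{O}_B)$ and $\mathcal{O}_B$-locally free, hence acyclic for $\cHom_{\mathcal{O}_B}(-,\mathcal{O}_B)$. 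Therefore $\cHom_{\mathcal{D}_B}(P^\bullet,\mathcal{O}_B)$ represents $\Sol(\mathcal{N})$ while $\cHom_{\mathcal{O}_B}(\For P^\bullet,\mathcal{O}_B)$ represents $(H\circ\For)(\mathcal{N})$, and the termwise forgetful inclusions from the first paragraph assemble into an honest morphism of complexes, which is $\eta_{\mathcal{N}}$.

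Finally, for naturality in $\mathcal{N}$ I would either use functorial resolutions (a bar-type resolution over $\mathcal{D}_B$, whose terms are induced and thus $\mathcal{O}_B$-locally free) so that the construction is strictly functorial, or note that because $\For$ is exact the natural transformation of underived $\cHom$-functors descends directly to one of the derived functors; either way a map $\mathcal{N}\to\mathcal{N}'$ induces a compatible map of resolutions with which the forgetful inclusions commute up to homotopy. The main obstacle is precisely the compatibility in the second paragraph: one must justify that a single $\mathcal{D}_B$-locally free resolution legitimately computes \emph{both} derived Homs at once, which hinges entirely on the local freeness of $\mathcal{D}_B$ over $\mathcal{O}_B$, and then promote the resulting pointwise map to a genuine natural transformation.
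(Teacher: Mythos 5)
Your proposal is correct and follows essentially the same route as the paper: take a single projective resolution of $\omega_B^{-1}\otimes_{\mathcal{O}_B}\mathcal{N}$ over $\mathcal{D}_B$, observe that (local) freeness of $\mathcal{D}_B$ over $\mathcal{O}_B$ lets the same resolution compute both derived $\cHom$'s, and define $\eta_{\mathcal{N}}$ termwise as the inclusion of $\mathcal{D}_B$-linear maps into $\mathcal{O}_B$-linear maps, with the chain-map and naturality checks reducing to the fact that all differentials and induced maps are given by precomposition.
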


\begin{proof}
For a $\cD_B$-module $\cN$ we define $\eta_\cN$ as follows.  Let $(\cA_{\bullet},d_{\bullet})$ be a projective resolution of $\omega_B^{-1} \otimes_{\cO_B} \cN$.  Since $\cD_B$ is free over $\cO_B$, this is also a projective resolution of $\omega_B^{-1} \otimes_{\cO_B} \cN$ as an $\cO_B$-module. Then define $\eta_\cN$ in degree $i$
\[
\eta_i \colon \cHom_{\cD_B}(\cA_i,\cO_B) \to \cHom_{\cO_B}(\For(\cA_i),\cO_B)
\]
to be the natural inclusion of the set of $\cD_B$-linear maps into the set of $\cO_B$-linear maps. 

The differentials $\cHom_{\cD_B}(d_i,\cO_B) = d_i^*$ and $\cHom_{\cO_B}(d_i,\cO_B) = d_i^*$ are both given by precomposition with $d_i$.  Thus for $\alpha$ a local section of $\cHom_{\cD_B}(\cA_i,\cO_B)$,
\begin{equation}\label{eqn:etachainmap}
   (\eta_{i+1} \circ d_{i+1}^*) (\alpha) = \alpha \circ d_{i+1} = (d_{i+1}^* \circ \eta_{i})  (\alpha),
\end{equation}
which shows that $\eta_\cN$ is a chain map. A similar equation shows that $\eta_\cN$ is well-defined up to quasi-isomorphism.

We now show $\eta$ is natural. Let $f\colon \cN_1 \to \cN_2$ be a morphism of $\cD_B$-modules.  Let $\cA_\bullet$ and $\cB_\bullet$ be projective resolutions of of $\cN_1$ and $\cN_2$ respectively.  Denote the map induced by $f$ on the resolutions by $f_i \colon \cA_i \to \cB_i$.  Similar to before, both maps 
$\cHom_{\cD_B}(f_i,\cO_B) = f_i^*$ and $\cHom_{\cO_B}(f_i,\cO_B) = f_i^*$ are given by precomposition and thus
\[
\begin{tikzcd}
 \cHom_{\cD_B}(\cA_i,\cO_B) \arrow[d, hook, "\eta_{\cN_1}"] & \cHom_{\cD_B}(\cB_i,\cO_B) \arrow[d, hook, "\eta_{\cN_2}"] \arrow[l, swap, "f_i^*"]\\
\cHom_{\cO_B}(\cA_i,\cO_B) & \cHom_{\cO_B}(\cB_i,\cO_B) \arrow[l, swap, "f_i^*"].
\end{tikzcd}
\]
is commutative.  Hence $\eta$ is natural.

\end{proof}

In what follows, we will abuse notation and omit the notation $\For$ for the forgetful functor, using  $\mathcal{N}$ to denote both the $\mathcal{D}_B$-module and the underlying $\mathcal{O}_B$-module.

The de Rham functor from the derived category of right $\mathcal{D}_B$-modules to the perverse sheaves is defined by
\[
\DR(\bullet)=\bullet \otimes_{\mathcal{D}_B}^L \cO_B.
\]
Let $\Theta_B$ be the tangent bundle of $B$.  The Spencer complex $\mathrm{Sp}^i = \mathcal{D}_B \otimes_{\cO_B} \wedge^{-i} \Theta_B$ is a resolution of $\mathcal{O}_B$ as a left $\mathcal{D}_B$-module in degrees $-d \leq i \leq 0$.  Using this we can write $\DR(\cM)$ for a right $\mathcal{D}_B$-module $\cM$ as 
\[
\DR ( \cM ) = \left[ \cM \otimes_{\cO_B} \wedge^{n} \Theta_B \to \ldots \to \cM \otimes_{\cO_B} \wedge^{1} \Theta_B \to \cM \right]
\]
where the rightmost term is in degree $0$.

Consider moreover the dualizing functor on right $\mathcal{D}_B$-modules
\[\mathbb{D}(\mathcal{M})=R\cHom_{\mathcal{D}_B}(\mathcal{M}, \omega_B\otimes_{\mathcal{O}_B}\mathcal{D}_B)[d].\]

In the case of a filtered regular holonomic $\mathcal{D}_B$-module $\mathcal{M}$, the Spencer complex $\DR(\mathcal{M})$ is a perverse sheaf. In this case, there is then an isomorphism \cite[Propsition 4.2.1]{HTT}
\begin{equation}\label{eq: dr sol}\DR(\mathcal{M})\cong \Sol(\mathbb{D}\mathcal{M})[d]\end{equation}
as well as isomorphisms \cite[Proposition 4.7.9]{HTT}
\begin{equation}\label{eq: dr sol 2}
\Sol(\mathbb{D}\mathcal{M})[d]\cong \mathbf{D}(\Sol(\mathcal{M})[d]),
\end{equation}
\begin{equation}\label{eq: dr dual}
\DR(\mathbb{D}\mathcal{M})\cong \mathbf{D}(\DR(\mathcal{M}))
\end{equation}
where $\mathbf{D}(-)$ denotes Verdier duality. Combining \eqref{eq: dr sol} and \eqref{eq: dr sol 2} and taking the Verdier dual of both sides yields
\[\mathbf{D}(\DR(\mathcal{M}))\cong \Sol(\mathcal{M})[d].\]
Combining this with \eqref{eq: dr dual} yields an isomorphism
\begin{equation}\label{eq: dr dual sol}
\DR(\mathbb{D}\mathcal{M})\cong \Sol(\mathcal{M})[d].
\end{equation}

Let $M=(\mathcal{M}, F_\bullet \mathcal{M}, K, \phi)$ be a pure Hodge module of weight $w$ with polarization $\phi$ on a complex algebraic variety $B$ of dimension $d$.  Recall that $\phi$ induces an isomorphism
\[ M(w)\xrightarrow{\sim} \mathbb{D}M.\]
In particular, we have a morphism
\[\varphi\colon \mathcal{M}(w)\rightarrow \mathbb{D}\mathcal{M}\]
with the property $\DR(\varphi)=\phi_\C$, where $\phi_\C$ denotes the extension of the polarization $\phi$ to $K\otimes_\Q\C=\DR(\mathcal{M})$. 

Applying the isomorphism \eqref{eq: dr dual sol} to $\DR(\varphi)=\phi_\C$, we obtain
\[\phi_\C\colon \colon \DR(\mathcal{M})\rightarrow \Sol(\mathcal{M})[d].\]

Then letting $\iota\colon F_{-w}\mathcal{M}\hookrightarrow \mathcal{M}$ denote the natural inclusion of $\mathcal{O}_B$-modules and $\sigma\colon K\hookrightarrow \DR(\mathcal{M})$ the inclusion induced by the canonical embedding $\Q\hookrightarrow \C$, we define
\begin{defn}\label{def: PI map HM}
The \emph{period integral map} $\mathcal{P}_M$ for a polarized Hodge module $M$ is the composition 
\begin{equation}\label{eq: P comp}
\scriptstyle K\xhookrightarrow{\sigma}  \DR(\mathcal{M})\xrightarrow{\phi_\C}  \Sol(\mathcal{M})[d]\xhookrightarrow{\eta_\mathcal{M}[d]}  R\cHom_{\cO_B}(\omega_B^{-1}\otimes_{\mathcal{O}_B}\mathcal{M},\cO_B)[d]\xrightarrow{H(\iota)[d]}R\cHom_{\cO_B}(\omega_B^{-1}\otimes_{\mathcal{O}_B}F_{-w}\mathcal{M},\cO_B)[d].\end{equation}
\end{defn}


\section{The map \texorpdfstring{$\mathcal{P}_M$}{PM} in the VHS case}\label{sec: rest to VHS}

For any polarized variation of Hodge structures $V=(\mathcal{V}, \nabla, F^\bullet \mathcal{V}, \mathbb{V}, Q)$ of weight $w$ on a complex algebraic variety of dimension $d$, one can associate a 
Hodge module $M_V=(\mathcal{M}, F_\bullet \mathcal{M}, K, \phi)$ of weight $w+d$  by defining
\begin{enumerate}
\item the perverse sheaf $K=\mathbb{V}[d]$,
\item the $\mathcal{D}_B$-module $\mathcal{M}=\omega_B\otimes_{\mathcal{O}_B} \mathcal{V}$,
\item the filtration $F_p \mathcal{M}=\omega_B\otimes_{\mathcal{O}_B}  F^{-p-d}\mathcal{V}$,
\item the polarization $\phi\colon K(w+d)\rightarrow \mathbf{D}K$ induced by $Q\colon \mathbb{V}(w)\rightarrow \mathbb{V}^\vee$.
\end{enumerate}

Hence if $M=M_V$ is a polarized Hodge module attached to some polarized variation of Hodge structures $V$, then there is a period integral map $\mathcal{P}_M$ for the Hodge module $M$ as well as a period integral map $\mathbb{P}_V$ for the variation of Hodge structures $V$. 

\propVHS

\begin{proof}
Suppose that $M=M_V$ is a polarized Hodge module attached to some polarized variation of Hodge structures $V$. 
Note that in the case  $M=M_V$, we have $K\otimes_\Q\C=\mathbb{V}[d]\otimes_\Q\C=\mathcal{V}^\nabla[d]$. Hence we know $\DR(\mathcal{M})\cong \mathcal{V}^\nabla[d]$. It follows that the inclusion  $\sigma\colon K\hookrightarrow \DR(\mathcal{M})$ induced by the canonical embedding $\Q\hookrightarrow \C$ is in fact  
\begin{equation}\label{eq: s equation}
s[d]\colon \mathbb{V}[d] \hookrightarrow \mathcal{V}^\nabla[d],\end{equation}
where $s$ is the inclusion introduced in \eqref{eq: s def} in Section \ref{sec: VHS}.

Additionally, the polarization $\phi$ of $M$ is just $Q[d]$, where $Q$ is the polarization of $V$. Hence $\phi_\C$=$Q_\C[d]$, where $Q_\C[d]$ denotes the extension of $Q$ to $\mathbb{V}[d]\otimes_\Q\C\cong \mathcal{V}^\nabla[d]$. So the map $\phi_\C\colon \DR(\mathcal{M})\rightarrow \Sol(\mathcal{M})[d]$ becomes
\[Q_\C[d]\colon \mathcal{V}^\nabla[d]\rightarrow\Sol(\mathcal{M})[d].\]

Since $\mathcal{M}=\omega_B\otimes_{\mathcal{O}_B} \mathcal{V}$, we have
\[\Sol(\mathcal{M})=R\cHom_{\mathcal{D}_B}(\omega_B^{-1}\otimes_{\mathcal{O}_B}\omega_B\otimes_{\mathcal{O}_B} \mathcal{V},\mathcal{O}_B)=\cHom_{\mathcal{D}_B}(\mathcal{V},\mathcal{O}_B).\]

So, in fact, we may write the map $\phi_\C$ as
\begin{equation}\label{eq: phi equation}Q_\C[d]\colon \mathcal{V}^\nabla[d]\rightarrow\cHom_{\mathcal{D}_B}( \mathcal{V},\mathcal{O}_B)[d].\end{equation}

Similarly, using the definition of the natural transformation $\eta_\mathcal{M}$ from the proof of  Lemma \ref{lem: eta lem}, the substitution $\mathcal{V}=\omega_B^{-1}\otimes_{\mathcal{O}_B}\mathcal{M}$ yields that $\eta_\mathcal{M}$ is just the natural inclusion
\begin{equation}\label{eq: eta inclusion} \cHom_{\mathcal{D}_B}( \mathcal{V},\mathcal{O}_B)[d]\hookrightarrow \cHom_{\mathcal{O}_B}( \mathcal{V},\mathcal{O}_B)[d].\end{equation}

Similarly substituting $\mathcal{V}=\omega_B^{-1}\otimes_{\mathcal{O}_B}\mathcal{M}$ yields that the map
\[H(\iota)[d]\colon R\cHom_{\cO_B}(\omega_B^{-1}\otimes_{\mathcal{O}_B}\mathcal{M},\cO_B)[d]\xrightarrow{H(\iota)}R\cHom_{\cO_B}(\omega_B^{-1}\otimes_{\mathcal{O}_B}F_{-w-d}\mathcal{M},\cO_B)[d]\]
is just
\begin{equation}\label{eq: iota equation}i^\vee[d]\colon \cHom_{\cO_B}(\mathcal{V},\cO_B)[d]\rightarrow \cHom_{\cO_B}(F^w\mathcal{V},\cO_B)[d],\end{equation}
where $i\colon F^w\mathcal{V}\hookrightarrow \mathcal{V}$ is the natural inclusion \eqref{eq: i eq} in Section \ref{sec: VHS}.

Hence by combining \eqref{eq: s equation}, \eqref{eq: phi equation},\eqref{eq: eta inclusion},  and \eqref{eq: iota equation}, we obtain that in the case $M=M_V$, the  map $\mathcal{P}_{M}$ is given by the composition
\[\mathbb{V}[d]\xhookrightarrow{s[d]}\mathcal{V}^\nabla[d] \xrightarrow{Q_\C[d]} \cHom_{\mathcal{D}_B}(\mathcal{V},\mathcal{O}_B)[d]\xhookrightarrow{\eta_\mathcal{M}[d]} \cHom_{\cO_B}( \mathcal{V},\cO_B)[d]\xrightarrow{i^\vee[d]}\cHom_{\cO_B}(  F^{w}\mathcal{V},\cO_B)[d].\]

For comparison, as established in Definition \ref{def: PI map VHS}, the period integral map $\mathbb{P}_V$ with the shift $[d]$ is the composition
\[\mathbb{V}[d]\xhookrightarrow{s[d]}\mathcal{V}^\nabla[d]\xhookrightarrow{t[d]} \mathcal{V}[d]\xrightarrow{Q_{\mathcal{O}_B}[d]} \cHom_{\cO_B}(\mathcal{V},\mathcal{O}_B)[d]\xrightarrow{i^\vee[d]}\cHom_{\mathcal{O}_B}(F^w\mathcal{V},\mathcal{O}_B)[d].\]

Thus to show that $\mathcal{P}_M=\mathbb{P}_V[d]$ we just need to show that the following diagram commutes
\begin{equation}\label{dia: VHS1}
\begin{tikzcd}
\mathcal{V}^\nabla\arrow[r, "Q_\C"]\arrow[d, "t"] & \cHom_{\mathcal{D}_B}(\mathcal{V},\mathcal{O}_B)\arrow[d, "\eta_\mathcal{M}"]\\
\mathcal{V}\arrow[r, "Q_{\mathcal{O}_B}"]& \cHom_{\cO_B}(\mathcal{V},\mathcal{O}_B).
\end{tikzcd}
\end{equation}

 Observe that the $\mathcal{O}_B$-module $\cHom_{\cO_B}(\mathcal{V},\mathcal{O}_B)$ is a left $\mathcal{D}_B$-module, where the action of $\mathcal{D}_B$ on a local section $f$ of $  \cHom_{\cO_B}(\mathcal{V},\mathcal{O}_B)$  is given by 

\[(\partial f)(v)=\partial (f(v))-f(\partial v).\]
Thus let 
\[\cHom_{\cO_B}(\mathcal{V},\mathcal{O}_B)^\nabla=\{f\in \cHom_{\cO_B}(\mathcal{V},\mathcal{O}_B)\mid \partial f=0\}.\]
So then $f\in \cHom_{\cO_B}(\mathcal{V},\mathcal{O}_B)^\nabla$ if and only if $(\partial f)(v)=f(\partial v)$ for all $v\in \mathcal{V}$. In other words
\[\cHom_{\cO_B}(\mathcal{V},\mathcal{O}_B)^\nabla=\cHom_{\mathcal{D}_B}(\mathcal{V},\mathcal{O}_B).\]
So then Diagram \eqref{dia: VHS1} is just the diagram
\begin{equation}\label{dia: VHS2}
\begin{tikzcd}
\mathcal{V}^\nabla\arrow[r, "Q_\C"]\arrow[d, "t"] & \cHom_{\cO_B}(\mathcal{V},\mathcal{O}_B)^\nabla \arrow[d, "u"]\\
\mathcal{V}\arrow[r, "Q_{\mathcal{O}_B}"]& \cHom_{\cO_B}(\mathcal{V},\mathcal{O}_B),
\end{tikzcd}
\end{equation}
where $t$ and $u$ are the natural inclusions. Since $\varphi_{\mathcal{O}_U}$ is a $\mathcal{D}_B$-module homomorphism, it follows that it induces a homomorphism $\varphi^\nabla=Q_\C$ on flat sections. The commutativity of $\varphi^\nabla$ with the natural inclusions then follows. 
\end{proof}


\section{The map \texorpdfstring{$\mathcal{P}_M$}{PM} and minimal extension}\label{sec: min ext}

Suppose that $U\subset B$ is an open subset of a complex algebraic variety $B$ and consider the inclusion $j\colon U\hookrightarrow B$. Let $V$ be a polarized variation of Hodge structures on $U$. Then recall, as proven in \cite{saito}, that the \emph{minimal extension functor} $j_{!*}$ sends the polarized variation of Hodge structure $V$ to a uniquely defined polarized Hodge module $M$ on $B$ such that the restriction of $M$ to $U$ is the polarized Hodge module $M_V$. The $\mathcal{D}_B$-module $\mathcal{M}$ underlying this Hodge module $M$ is the minimal extension $j_{!*}\mathcal{V}$ of the vector bundle with connection $(\mathcal{V}, \nabla)$ underlying the variation of Hodge structures $V$. 

In order to relate the minimal extension functor for Hodge modules to the map $\mathcal{P}_M$, note that for any polarized Hodge module $M$ on $B$, the resulting map $\mathcal{P}_M$ is a morphism in the category $D^b(\Q_B)$, the derived category of the category of sheaves of $\Q$-vector spaces on $B$. This derived category $D^b(\Q_B)$ is triangulated, hence it follows that the morphism $\mathcal{P}_M$ yields a distinguished triangle in $D^b(\Q_B)$ given by
\begin{equation}\label{eq: dist triangle 1}
K\xrightarrow{\mathcal{P}_M}  R\cHom_{\cO_B}(\omega_B^{-1}\otimes_{\mathcal{O}_B}F_{-w}\mathcal{M},\cO_B)[d]
\rightarrow \cone{\mathcal{P}_M}\rightarrow K[1].
\end{equation}
Let $L$ be the homotopy image of $\mathcal{P}_M$ in $D^b(\Q_B)$, meaning that $L$ is the cone of the morphism \[R\cHom_{\cO_B}(\omega_B^{-1}\otimes_{\mathcal{O}_B}F_{-w}\mathcal{M},\cO_B)[d]
\rightarrow \cone{\mathcal{P}_M}.\]
Hence $L$ fits into  another distinguished triangle in $D^b(\Q_B)$ given by
\begin{equation}\label{eq: dist triangle 2}
R\cHom_{\cO_B}(\omega_B^{-1}\otimes_{\mathcal{O}_B}F_{-w}\mathcal{M},\cO_B)[d]
\rightarrow \cone{\mathcal{P}_M}\rightarrow L\rightarrow R\cHom_{\cO_B}(\omega_B^{-1}\otimes_{\mathcal{O}_B}F_{-w}\mathcal{M},\cO_B)[d+1].
\end{equation}
From the identifications of the terms in the distinguished triangles \eqref{eq: dist triangle 1} and \eqref{eq: dist triangle 2}, it follows that the morphism $\mathcal{P}_M$ factors through an isomorphism 
\[\gamma_M\colon K\rightarrow L[-1].\]
In particular, the object $L$ of $D^b(\Q_B)$ is a perverse sheaf. Since the category of perverse sheaves $\Perv(\Q_B)$ is a full subcategory of $D^b(\Q_B)$ it follows that $\gamma_M$ is a morphism in the category $\Perv(\Q_B)$.

\thmMinExt

\begin{proof}
Let us write  $V=(\mathcal{V}, \nabla, F^\bullet \mathcal{V}, \mathbb{V}, Q)$ and $M=(\mathcal{M}, F_\bullet \mathcal{M}, K, \phi)$. Recall that the Hodge module $M_V$ associated to $V$ is given by
\[ M_V=(\omega_B\otimes_{\mathcal{O}_B} \mathcal{V}, F_p \mathcal{M_V}=\omega_B\otimes_{\mathcal{O}_B}  F^{-p-d}\mathcal{V}, \mathbb{V}[d], Q).\]
Then since 
$M=j_{!*}V$, we know in particular that $K=j_{!*}(\mathbb{V}[d]).$
Hence both $\gamma_M$ and $j_{!*}(\Gamma_{\mathbb{P}_V[d]})$ are morphisms of perverse sheaves with domain the perverse sheaf $K$.

Note that the minimal extension to $B$ of a simple local system $\mathbb{L}$ on $U\subset B$ is characterized by the property that it is the unique simple perverse sheaf on $B$ restricting to $\mathbb{L}$ on $U$ (\cite{beilinson2018faisceaux}). Since $\mathbb{V}$ is simple as a $\Q$-local system, it follows that $K$ is simple as a perverse sheaf. Hence both the perverse sheaf $L$ and the image of $j_{!*}(\Gamma_{\mathbb{P}_V[d]})$ are simple as perverse sheaves.

Now let us consider the restriction of the morphism $\gamma_M$ to the open set $U\subset B$. It follows from Proposition \ref{prop: VHS} that restricted to $U$, the morphism $\mathcal{P}_M$ is just the morphism of $\Q$-local systems 
\[\mathbb{P}_V[d]\colon \mathbb{V}[d]\rightarrow \cHom_{\cO_U}(\omega_U^{-1}\otimes_{\mathcal{O}_U}F^w\mathcal{V},\cO_U)[d].\] 
Hence after restricting to $U$, the distinguished triangle \eqref{eq: dist triangle 1} is the complex of $\Q$-local systems
\begin{equation}\label{diagram: cone1}
\adjustbox{scale=0.8,center}{%
\begin{tikzcd}
0\arrow[d] & 0\arrow[d] & 0\arrow[d] & 0\arrow[d] & \\
0\arrow[r]\arrow[d] & 0\arrow[r]\arrow[d] & \mathbb{V}\arrow[d, "\mathbb{P}_V"]\arrow[r,"\mathrm{Id}"] & \mathbb{V} \arrow[d] & \mathrm{deg} = -d-1\\
\mathbb{V}\arrow[r,"\mathbb{P}_{V}"]\arrow[d]& \cHom_{\mathcal{O}_U}(F^w\mathcal{V},\mathcal{O}_U)\arrow[r, "\mathrm{Id}"]\arrow[d]& \cHom_{\mathcal{O}_U}(F^w\mathcal{V},\mathcal{O}_U)\arrow[r]\arrow[d]&0\arrow[d] & \!\!\!\!\!\!\!\!\!\!\mathrm{deg} = -d\\
0&0&0&0.
\end{tikzcd}
}
\end{equation}
Similarly, after restricting to $U$, the distinguished triangle \eqref{eq: dist triangle 2} is the complex 
\begin{equation}\label{diagram: cone2}
\adjustbox{scale=0.85,center}{%
\begin{tikzcd}
0\arrow[d]&0\arrow[d]&0\arrow[d]&0\arrow[d]\\
0\arrow[r]\arrow[d]&\mathbb{V}\arrow[r, hookrightarrow]\arrow[d, "\mathbb{P}_V"]&\mathbb{V}\oplus \cHom_{\mathcal{O}_U}(F^w\mathcal{V},\mathcal{O}_U)\arrow[d]\arrow[r, twoheadrightarrow]& \cHom_{\mathcal{O}_U}(F^w\mathcal{V},\mathcal{O}_U)\arrow[d]\\
\cHom_{\mathcal{O}_U}(F^w\mathcal{V},\mathcal{O}_U)\arrow[r,"\mathrm{Id}"]\arrow[d]& \cHom_{\mathcal{O}_U}(F^w\mathcal{V},\mathcal{O}_U)\arrow[r, "\mathrm{Id}"]\arrow[d]& \cHom_{\mathcal{O}_U}(F^w\mathcal{V},\mathcal{O}_U)\arrow[r]\arrow[d]&0\arrow[d]\\
0&0&0&0,
\end{tikzcd}
}
\end{equation}
also in degrees $-d-1$ and $-d$.

A diagram chase then reveals that the morphism $\gamma_M|_U\colon K|_U\rightarrow L[-1]|_U$  is the morphism of complexes of $\Q$-local systems which is $0$ in every degree except degree $-d$, where it is the graph morphism
$\Gamma_{\mathbb{P}_V[d]}$. 
Indeed, from diagram \eqref{diagram: cone1}, the morphism  
\[\cone(\mathbb{P}_V[d])[-1]\rightarrow K\] is the identity on $\mathbb{V}$ in degree $-d$ and is $0$ in every other degree.
From Diagram \eqref{diagram: cone2}, the morphism
\[\cone(\mathbb{P}_V[d])[-1]\rightarrow L[-1]\]
is just the inclusion 
$\mathbb{V}\hookrightarrow \mathbb{V}\oplus \cHom_{\mathcal{O}_U}(F^w\mathcal{V},\mathcal{O}_U)$ in degree $-d$,  is the identity on $\cHom_{\mathcal{O}_U}(F^w\mathcal{V},\mathcal{O}_U)$ in degree $-d+1$, and is $0$ in all other degrees.  
From the fact that $\gamma_M$ fits into the commutative square
\[
\begin{tikzcd}
\cone(\mathbb{P}_V[d])[-1]\arrow[r]\arrow[d, equal] & K\arrow[d, "\gamma_M"]\\
\cone(\mathbb{P}_V[d])[-1]\arrow[r]& L[-1],
\end{tikzcd}
\]
it follows $\gamma_M|_U$ is zero in all degrees except $-d$ and in degree $-d$ is given by $v\mapsto (v, \mathbb{P}_V(v))$ for all  $v\in \mathbb{V}$.

Hence we have established that $\gamma_M$ is a morphism of simple perverse sheaves whose restriction to $U$ is the morphism $\Gamma_{\mathbb{P}_V[d]}$, which is a morphism of simple local systems. It follows that $\gamma_M=j_{!*}(\Gamma_{\mathbb{P}_V[d]})$.

\end{proof}


\section{Naturality of \texorpdfstring{$\mathcal{P}_M$}{PM} for polarized morphism}\label{sec: naturality}

Recall that we used the notation $H$ for the functor on the derived category of $\mathcal{O}_B$-modules
\[
H(-) = R\cHom_{\cO_B}(\omega_B^{-1}\otimes_{\mathcal{O}_B}-,\cO_B).
\]
We will denote by $HF_{-w}$ the functor on the derived category of $\mathcal{O}_B$-modules given by
\[
HF_{-w}(-) = R\cHom_{\cO_B}(\omega_B^{-1}\otimes_{\mathcal{O}_B}F_{-w}-,\cO_B).
\]
Recall from the definition of $\mathcal{P}_M$, that for a Hodge module $M=(\mathcal{M}, F_\bullet \mathcal{M}, K)$ we use the notation $\iota$ for the natural inclusion of $\mathcal{O}_B$-modules $\iota\colon F_{-w}\mathcal{M}\hookrightarrow \mathcal{M}$.

\begin{prop}\label{prop: inclusion}
If $f\colon M_1\rightarrow M_2$ is a morphism of polarized Hodge modules on a complex manifold $B$, then the following diagram commutes
\begin{equation}\label{dia: incl}
\begin{tikzcd}
K_1\arrow[r, "f_K"]\arrow[d, "\mathcal{P}_{M_1}"] & K_2 \arrow[d, "\mathcal{P}_{M_2}"]\\
R\cHom_{\cO_B}(\omega_B^{-1}\otimes_{\mathcal{O}_B}F_{-w}\mathcal{M}_1,\cO_B)[d]& R\cHom_{\cO_B}(\omega_B^{-1}\otimes_{\mathcal{O}_B}F_{-w}\mathcal{M}_2,\cO_B)[d]\arrow[l, "HF_{-w}(\iota)"].
\end{tikzcd}
\end{equation}

\end{prop}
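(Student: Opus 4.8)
The plan is to factor each period integral map $\mathcal{P}_{M_i}$ into the four elementary maps of Definition \ref{def: PI map HM} and then realize the rectangle \eqref{dia: incl} as the outer boundary of four commutative squares stacked vertically, one per elementary map. A morphism $f\colon M_1\to M_2$ of polarized Hodge modules induces compatible morphisms on all the underlying data: the map $f_K\colon K_1\to K_2$ of perverse sheaves, a morphism $f_\mathcal{M}\colon\mathcal{M}_1\to\mathcal{M}_2$ of filtered regular holonomic $\mathcal{D}_B$-modules with $f_\mathcal{M}(F_p\mathcal{M}_1)\subset F_p\mathcal{M}_2$ (so that $f_\mathcal{M}\circ\iota_1=\iota_2\circ f_\mathcal{M}|_{F_{-w}}$), and $\DR(f_\mathcal{M})=f_K\otimes_\Q\C$ under the identification $K\otimes_\Q\C\cong\DR(\mathcal{M})$. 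These supply the vertical maps linking the intermediate rows $\DR(\mathcal{M}_i)$, $\Sol(\mathcal{M}_i)[d]$, $H(\mathcal{M}_i)[d]$ that sit between $K_i$ and $HF_{-w}(\mathcal{M}_i)[d]$.

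First I would dispatch the three formal squares. The square for $\sigma$ commutes because $\sigma$ is the coefficient inclusion $\Q\hookrightarrow\C$ and $\DR(f_\mathcal{M})=f_K\otimes_\Q\C$; the square for $\eta$ is exactly the naturality of the transformation $\eta$ established in Lemma \ref{lem: eta lem}, applied to $f_\mathcal{M}$; and the square for $H(\iota)$ is obtained by applying the contravariant functor $H$ to the filtration-compatibility identity $f_\mathcal{M}\circ\iota_1=\iota_2\circ f_\mathcal{M}|_{F_{-w}}$, which yields $H(\iota_1)\circ H(f_\mathcal{M})=HF_{-w}(f_\mathcal{M})\circ H(\iota_2)$ directly from the definition of the functor $HF_{-w}$ on morphisms.

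The main obstacle is the middle square, involving the polarization maps $\phi_{1,\C}$ and $\phi_{2,\C}$, since this is the only non-formal step. Here I would unwind $\phi_{i,\C}$ as $\DR(\varphi_i)$ transported across the canonical isomorphism $\DR(\mathbb{D}\mathcal{M}_i)\cong\Sol(\mathcal{M}_i)[d]$ of \eqref{eq: dr dual sol}. Using that $\DR$ and the isomorphism \eqref{eq: dr dual sol} are natural in the $\mathcal{D}$-module argument (so that $\Sol(f_\mathcal{M})[d]$ corresponds to $\DR(\mathbb{D}f_\mathcal{M})$), commutativity of this square reduces to the $\mathcal{D}$-module identity $\varphi_1=\mathbb{D}(f_\mathcal{M})\circ\varphi_2\circ f_\mathcal{M}(w)$. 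This identity says precisely that the polarization $\phi_1$ is the pullback along $f$ of $\phi_2$, i.e.\ that $f$ is a morphism of \emph{polarized} Hodge modules, so the square commutes by the defining compatibility of polarized morphisms. I expect verifying the naturality of the identification \eqref{eq: dr dual sol} and keeping track of the opposite variances of $\DR$ (covariant) and $\Sol$ (contravariant) to be the only delicate bookkeeping.

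Finally, pasting the four squares gives the claim. Applying the squares for $H(\iota)$ and $\eta$ moves $f_\mathcal{M}$ downward from the bottom edge to the $\Sol$-row, after which the $\sigma$-square feeds $\DR(f_\mathcal{M})\circ\sigma_1=\sigma_2\circ f_K$ into the polarization square to produce $\phi_{1,\C}\circ\sigma_1$; reading off the composite then gives $HF_{-w}(f_\mathcal{M})[d]\circ\mathcal{P}_{M_2}\circ f_K=\mathcal{P}_{M_1}$, which is exactly the commutativity of \eqref{dia: incl}.
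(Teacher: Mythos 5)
Your proposal is correct and follows essentially the same route as the paper: decompose each $\mathcal{P}_{M_i}$ into the constituent maps of Definition \ref{def: PI map HM}, use the polarized-morphism identity $\phi_1=\mathbb{D}(f)\circ\phi_2\circ f$ for the non-formal middle square, the naturality of $\eta$ from Lemma \ref{lem: eta lem}, and the filtration compatibility $f_\mathcal{M}(F_{-w}\mathcal{M}_1)\subset F_{-w}\mathcal{M}_2$ for the last square, then paste. The only cosmetic difference is that the paper merges your $\sigma$-square and polarization square into a single commutative diagram.
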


\begin{proof}
Write $M_1=(\mathcal{M}_1, F_\bullet \mathcal{M}_1, K_1, \phi_1)$ and $M_2=(\mathcal{M}_2, F_\bullet \mathcal{M}_2, K_2, \phi_2)$. Since $f$ is a morphism of polarized Hodge modules, we have the equality
\[\phi_1=\mathbb{D}(f)\circ \phi_2\circ f\]
and the following diagram commutes
\[
\begin{tikzcd}
K_1 \arrow[r," \sigma_1"] \arrow[d,"f_K"]  &\DR(\cM_1) \arrow[r," \phi_{1,\C}"] \arrow[d,"\DR(f_\mathcal{M})"]  &   R\mathcal{H}om_{\mathcal{D}}(\omega_B^{-1}\otimes_{\mathcal{O}_B}\cM_1,\cO_B)[d]    \\
K_2\arrow[r,"\sigma_2"]   & \DR(\cM_2) \arrow[r," \phi_{2,\C}"]   &   \arrow[u,"\DR(\mathbb{D}(f_\mathcal{M}))"]R\mathcal{H}om_{\mathcal{D}}(\omega_B^{-1}\otimes_{\mathcal{O}_B}\cM_2,\cO_B)[d].
\end{tikzcd}
\]

Moreover by Lemma \ref{lem: eta lem}, we know $\eta$ is a 
 a natural transformation from the functor $\Sol$ to $H\circ F$, so the following square commutes

\[
\begin{tikzcd}
   R\mathcal{H}om_{\mathcal{D}}(\omega_B^{-1}\otimes_{\mathcal{O}_B}\cM_1,\cO)[d] \arrow[r,"\eta_{\cM_1}"] & R\mathcal{H}om_{\cO_B}(\omega_B^{-1}\otimes_{\mathcal{O}_B}\cM_1,\cO_B)[d] \\
  R\mathcal{H}om_{\mathcal{D}}(\omega_B^{-1}\otimes_{\mathcal{O}_B}\cM_2,\cO)[d] \arrow[r,"\eta_{\cM_2}"]  \arrow[u,"\DR(\mathbb{D}(f_M))"] &  R\mathcal{H}om_{\cO_B}(\omega_B^{-1}\otimes_{\mathcal{O}_B}\cM_2,\cO)[d] \arrow[u,"R\mathcal{H}om_{\cO_B} (f _M{,} \cO_B) "] .
\end{tikzcd}
\]

Lastly the morphism $f$ must behave compatibly with the filtrations on $\mathcal{M}_1$ and $\mathcal{M}_2$. In particular, we have $f_{\mathcal{M}}(F_{-w}\mathcal{M}_1)\subset F_{-w}\mathcal{M}_2$. Combining this fact with the above two commutative diagrams yields the commutativity of the following diagram

\[
\adjustbox{scale=0.8,center}{%
\begin{tikzcd}
K_1 \arrow[r," \sigma_1"] \arrow[d,"f_K"]  &
\DR(\cM_1) \arrow[r," \phi_{1,\C}"] \arrow[d,"\DR(f_\mathcal{M})"]  &
R\mathcal{H}om_{\mathcal{D}_B}(\omega_B^{-1}\otimes_{\mathcal{O}_B}\cM_1,\cO_B)[d]  \arrow[r,"\eta_{\cM_1}"] &
R\mathcal{H}om_{\cO_B}(\omega_B^{-1}\otimes_{\mathcal{O}_B}\cM_1,\cO_B)[d]  \arrow[r,"H(i_{\cM_1})"] &
R\mathcal{H}om_{\cO_B}(\omega_B^{-1}\otimes_{\mathcal{O}_B}F_{-w} \cM,\cO_B)[d] 
\\
K_2 \arrow[r," \sigma_2"]   &
\DR(\cM_2) \arrow[r," \phi_{2,\C}"]   &
R\mathcal{H}om_{\mathcal{D}_B}(\omega_B^{-1}\otimes_{\mathcal{O}_B}\cM_2,\cO_B)[d] \arrow[r,"\eta_{\cM_2}"]  \arrow[u,"\DR(\mathbb{D}(f_\mathcal{M}))"]&
 R\mathcal{H}om_{\cO_B}(\omega_B^{-1}\otimes_{\mathcal{O}_B}\cM_2,\cO_B)[d] \arrow[r,"H(i_{\cM_2})"] \arrow[u,"R\mathcal{H}om_{\cO_B}(f_\mathcal{M} {,} \cO_B) "] &
R\mathcal{H}om_{\cO_B}(\omega_B^{-1}\otimes_{\mathcal{O}_B}F_{-w} \cM_2,\cO_B)[d] \arrow[u,"R\mathcal{H}om_{\cO_B}(f_{\mathcal{M}}\circ \iota {,} \cO_B) "]
.
\end{tikzcd}
}
\]
The top row of the diagram is $\mathcal{P}_{M_1}$ and the bottom row is $\mathcal{P}_{M_2}$. Hence we indeed have 
\[\mathcal{P}_{M_1}=R\mathcal{H}om_{\cO_B}(f_{\mathcal{M}}\circ \iota {,} \cO_B)\circ \mathcal{P}_{M_2}\circ f_K.\]
\end{proof}

Recall that for any polarized Hodge module $M=(\mathcal{M}, F_\bullet \mathcal{M}, K, \phi)$, the period integral map $\mathcal{P}_M$ factors through the isomorphism of perverse sheaves  
$\gamma_M\colon K\rightarrow L[-1]$ defined in Section \ref{sec: min ext}. 
\begin{cor}\label{cor: gamma incl}
If $f\colon M_1\rightarrow M_2$ is a morphism of polarized Hodge modules on a complex manifold $B$, then the following diagram commutes
\begin{equation}
\begin{tikzcd}
L_1\arrow[r, "\gamma_{M_2}\circ f_K\circ \gamma_{M_1}^{-1}"]\arrow[d, "\pi_{M_1}"] & L_2 \arrow[d, "\pi_{M_2}"]\\
R\cHom_{\cO_B}(\omega_B^{-1}\otimes_{\mathcal{O}_B}F_{-w}\mathcal{M}_1,\cO_B)[d+1]& R\cHom_{\cO_B}(\omega_B^{-1}\otimes_{\mathcal{O}_B}F_{-w}\mathcal{M}_2,\cO_B)[d+1]\arrow[l, "HF_{-w}(\iota)"],
\end{tikzcd}
\end{equation}
where $\pi_{M_1}$ and $\pi_{M_2}$ are maps satisfying $\pi_{M_1}\circ \gamma_{M_1}=\mathcal{P}_{M_1}$ and $\pi_{M_2}\circ \gamma_{M_2}=\mathcal{P}_{M_2}$.
\end{cor}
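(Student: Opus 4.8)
The plan is to obtain this as a purely formal consequence of Proposition \ref{prop: inclusion}, using only that $\gamma_{M_1}$ and $\gamma_{M_2}$ are isomorphisms. The first step is to observe that the defining relation $\pi_{M_i}\circ\gamma_{M_i}=\mathcal{P}_{M_i}$, combined with the invertibility of $\gamma_{M_i}$ established in Section \ref{sec: min ext}, determines $\pi_{M_i}$ uniquely as
\[
\pi_{M_i}=\mathcal{P}_{M_i}\circ\gamma_{M_i}^{-1}.
\]
Thus the vertical maps of the square are not merely constrained but entirely fixed by $\mathcal{P}_{M_i}$ and $\gamma_{M_i}$, and I never need to unwind the internal structure of the homotopy image $L_i$.

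Given this, the commutativity reduces to a direct substitution. Writing $g=\gamma_{M_2}\circ f_K\circ\gamma_{M_1}^{-1}$ for the top arrow, I would trace the top-then-right composite:
\[
HF_{-w}(\iota)\circ\pi_{M_2}\circ g
=HF_{-w}(\iota)\circ\bigl(\mathcal{P}_{M_2}\circ\gamma_{M_2}^{-1}\bigr)\circ\bigl(\gamma_{M_2}\circ f_K\circ\gamma_{M_1}^{-1}\bigr)
=HF_{-w}(\iota)\circ\mathcal{P}_{M_2}\circ f_K\circ\gamma_{M_1}^{-1}.
\]
Proposition \ref{prop: inclusion} gives exactly $HF_{-w}(\iota)\circ\mathcal{P}_{M_2}\circ f_K=\mathcal{P}_{M_1}$, so the right-hand side collapses to $\mathcal{P}_{M_1}\circ\gamma_{M_1}^{-1}=\pi_{M_1}$, which is precisely the left arrow of the square. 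This is the asserted commutativity.

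The only delicate point is shift bookkeeping rather than anything substantive. The square in the statement is written one degree above the level at which $\mathcal{P}_M$ and $\gamma_M$ were introduced: $\gamma_{M_i}$ lands in $L_i[-1]$ and $\mathcal{P}_{M_i}$ in $HF_{-w}(\mathcal{M}_i)[d]$, whereas the corollary is phrased with $L_i$ and $HF_{-w}(\mathcal{M}_i)[d+1]$, so the relation $\pi_{M_i}\circ\gamma_{M_i}=\mathcal{P}_{M_i}$ must be read after applying $[-1]$ to $\pi_{M_i}$. Since $[1]$ is an autoequivalence of $D^b(\Q_B)$ and preserves commutativity of diagrams, it suffices to verify the identity at the unshifted level as above and then shift. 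I would also note that $g$ is prescribed explicitly in the statement rather than produced by functoriality of the mapping cone, so no compatibility of cone constructions is invoked; the whole argument rests only on the uniqueness of $\pi_{M_i}$ together with Proposition \ref{prop: inclusion}, and I anticipate no genuine obstacle.
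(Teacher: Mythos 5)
Your argument is correct and is essentially identical to the paper's own proof: both reduce the square to the identity $\mathcal{P}_{M_1}=HF_{-w}(\iota)\circ\mathcal{P}_{M_2}\circ f_K$ from Proposition \ref{prop: inclusion} and then cancel $\gamma_{M_2}^{-1}\circ\gamma_{M_2}$ after precomposing with $\gamma_{M_1}^{-1}$. Your extra remark on the shift bookkeeping is a harmless clarification of a point the paper leaves implicit.
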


\begin{proof}
By Proposition \ref{prop: inclusion}, we know 
$\pi_{M_1}\circ \gamma_{M_1}=HF_{-w}(\iota)\circ \pi_{M_2}\circ \gamma_{M_2}\circ f_K.$
Precomposing both sides with the isomorphism $\gamma_{M_1}^{-1}$ then yields the result.
\end{proof}


\section{The structure of \texorpdfstring{$\mathcal{P}_M$}{PM}}\label{sec: structure}
The Structure Theorem for polarized Hodge modules, as proved in \cite{saito2},  describes how any polarized Hodge module $M$ can be decomposed as a direct sum of minimal extensions of variations of polarized Hodge structures. In order to relate this to the structure of the period integral map $\mathcal{P}_M$, we begin by observing that $\mathcal{P}_M$ behaves well under direct sums. 

\begin{prop}\label{prop: M decomp}
Let  $M\in HM(B,w)$ be a polarized  Hodge module on a complex algebraic variety $B$ such that $M$ decomposes as a polarized direct sum
$M=\bigoplus_i M_i$. Then the period integral map for $M$ decomposes as
\[\mathcal{P}_M=\bigoplus_i \mathcal{P}_{M_i}.\]
\end{prop}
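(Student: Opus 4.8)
The plan is to reduce everything to the additivity of the functors appearing in the definition of $\mathcal{P}_M$ and to the compatibility of the structural maps with the given decomposition. First I would unwind what a polarized direct sum $M=\bigoplus_i M_i$ means at the level of underlying data: writing $M_i=(\mathcal{M}_i,F_\bullet\mathcal{M}_i,K_i,\phi_i)$, one has $K=\bigoplus_i K_i$ as perverse sheaves, $\mathcal{M}=\bigoplus_i\mathcal{M}_i$ as filtered regular holonomic $\mathcal{D}_B$-modules --- so in particular $F_{-w}\mathcal{M}=\bigoplus_i F_{-w}\mathcal{M}_i$ and $\iota=\bigoplus_i\iota_i$ --- and $\phi=\bigoplus_i\phi_i$, whence also $\varphi=\bigoplus_i\varphi_i$. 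Since $\mathcal{P}_M$ is the four-fold composition \eqref{eq: P comp} and since a composition of diagonal morphisms of direct sums is again diagonal, it suffices to show that each of the four factors $\sigma$, $\phi_\C$, $\eta_{\mathcal{M}}[d]$, $H(\iota)[d]$ is the direct sum over $i$ of the corresponding factor for $M_i$.

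Next I would invoke additivity: the functors $\DR$, $\Sol$, $\For$ and $H$ all preserve finite direct sums (in the last three cases because $R\cHom$ is additive in each variable), so every object in \eqref{eq: P comp} splits canonically as the direct sum of its analogues for the $M_i$, and the identifications \eqref{eq: dr dual sol} are themselves compatible with direct sums. It then remains to match the maps summand by summand. The inclusion $\sigma$ is induced by $\Q\hookrightarrow\C$ on each $K_i$, so $\sigma=\bigoplus_i\sigma_i$; the map $\phi_\C$ is $\DR(\varphi)$ transported through \eqref{eq: dr dual sol}, hence $\bigoplus_i\phi_{i,\C}$; and $H(\iota)[d]=\bigoplus_i H(\iota_i)[d]$ because $H$ is additive and $\iota$ is diagonal.

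The one step that I expect to require genuine care, rather than bookkeeping, is showing $\eta_{\mathcal{M}}=\bigoplus_i\eta_{\mathcal{M}_i}$, since $\eta$ is a natural transformation rather than a functor. Here the plan is to apply the naturality square of Lemma \ref{lem: eta lem} to the canonical inclusions $\mathcal{M}_i\hookrightarrow\mathcal{M}$ and projections $\mathcal{M}\twoheadrightarrow\mathcal{M}_i$; combined with the identity $\mathrm{id}_{\mathcal{M}}=\sum_i(\text{incl}_i)\circ(\text{proj}_i)$ and the additivity of $\Sol$ and $H\circ\For$, this forces $\eta_{\mathcal{M}}$ to be diagonal with $i$-th entry $\eta_{\mathcal{M}_i}$. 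Assembling the four identifications gives that every factor of \eqref{eq: P comp} is diagonal, so $\mathcal{P}_M=\bigoplus_i\mathcal{P}_{M_i}$. As a sanity check, I would note that the same conclusion also follows from Proposition \ref{prop: inclusion} applied to the inclusion and projection morphisms of the decomposition, which are morphisms of polarized Hodge modules.
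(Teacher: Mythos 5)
Your argument is correct and follows essentially the same route as the paper: both proofs reduce to checking that each factor in the composition \eqref{eq: P comp} is compatible with the direct-sum decomposition, with the paper leaving this as ``one may verify'' and also invoking Proposition \ref{prop: inclusion} via the inclusions $M_i\hookrightarrow M$ (which you note as a sanity check). Your explicit treatment of $\eta_{\mathcal{M}}=\bigoplus_i\eta_{\mathcal{M}_i}$ via naturality against the inclusions and projections is exactly the right way to fill in the one step the paper elides.
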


\begin{proof}
Note that the decomposition $M=\bigoplus_i M_{i}$ induces a decomposition
\begin{equation}\label{eq:RHomsplits}
\begin{aligned}
R\mathcal{H}om_{\cO_B}(\omega_B^{-1}\otimes_{\mathcal{O}_B}F_{-w} ( \bigoplus_i \mathcal{M}_{i} ),\cO_B)[d]&\cong 
R\mathcal{H}om_{\cO_B}(\omega_B^{-1}\otimes_{\mathcal{O}_B} \bigoplus_i F_{-w}\mathcal{M}_{i}),\cO_B)[d]\\
&\cong R\mathcal{H}om_{\cO_B}( \bigoplus_i (\omega_B^{-1}\otimes_{\mathcal{O}_B}F_{-w}\mathcal{M}_{i}),\cO_B)[d]\\
&\cong \bigoplus_iR\mathcal{H}om_{\cO_B}(\omega_B^{-1}\otimes_{\mathcal{O}_B}F_{-w} \mathcal{M}_{i},\cO_B)[d].
\end{aligned}
\end{equation}
In particular, it follows from Proposition \ref{prop: inclusion} that all of the  $\mathcal{P}_{M_i}$ may be computed via the inclusions $M_i\hookrightarrow M$. 

In fact, one may verify using \eqref{eq:RHomsplits} that under the assumption that $\phi_M=\bigoplus_i\phi_{M_i}$ all of the maps in \eqref{eq: P comp} which form the composition $\mathcal{P}_M$ are compatible with the decomposition $M=\bigoplus_i M_{i}$.  That is $\mathcal{P}_M|_{K_i}$ maps to $R\mathcal{H}om_{\cO_B}(\omega_B^{-1}\otimes_{\mathcal{O}_B}F_{-w} \mathcal{M}_{i},\cO_B)[d]$ and coincides with $\mathcal{P}_{M_i}$. Since $\mathcal{P}_M$ is a morphism in the derived category $D^b(\Q_B)$, which has direct sums, it follows that  $\mathcal{P}_M=\bigoplus_i \mathcal{P}_{M_{i}}$. 
\end{proof}

We obtain the following immediate consequence  of Proposition \ref{prop: M decomp} together with Corollary \ref{cor: gamma incl}.

\begin{cor}\label{cor: gamma decomp}
Let  $M\in HM(B,w)$ be a polarized  Hodge module on a complex algebraic variety $B$ such that $M$ decomposes as a polarized direct sum
$M=\bigoplus_i M_i$. 
Letting $\gamma_{M_i}\colon K_i\rightarrow L_i[-1]$ be isomorphisms constructed as in Section  \ref{sec: min ext}, then $\mathcal{P}_M$ factors through the isomorphism of perverse sheaves $\gamma_M=\bigoplus_i \gamma_{M_i}$.
\end{cor}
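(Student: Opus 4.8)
The plan is to combine the additivity of the period integral map established in Proposition \ref{prop: M decomp}, namely $\mathcal{P}_M = \bigoplus_i \mathcal{P}_{M_i}$, with the observation that the homotopy-image construction producing $\gamma_M$ out of $\mathcal{P}_M$ respects finite direct sums in the triangulated category $D^b(\Q_B)$. Recall from Section \ref{sec: min ext} that $\gamma_{M_i}\colon K_i \to L_i[-1]$ is manufactured from $\mathcal{P}_{M_i}$ by forming the distinguished triangles \eqref{eq: dist triangle 1} and \eqref{eq: dist triangle 2} and factoring through the identifications of their terms, and that $\gamma_M$ arises the same way from $\mathcal{P}_M$. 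Under the hypothesis $\phi_M = \bigoplus_i \phi_{M_i}$, Proposition \ref{prop: M decomp} applies, so it suffices to check that each stage of this construction is compatible with the decomposition $M = \bigoplus_i M_i$.

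First I would use that $D^b(\Q_B)$ admits finite direct sums and that the direct sum of distinguished triangles is again distinguished. Taking $\bigoplus_i$ of the triangles \eqref{eq: dist triangle 1} attached to the $M_i$, and rewriting the terms via the canonical decomposition $K = \bigoplus_i K_i$ together with the splitting \eqref{eq:RHomsplits} of the target $R\cHom$-object, I obtain a distinguished triangle whose first morphism is precisely $\bigoplus_i \mathcal{P}_{M_i} = \mathcal{P}_M$. Consequently $\bigoplus_i \cone(\mathcal{P}_{M_i})$ is a cone for $\mathcal{P}_M$, giving an identification $\cone(\mathcal{P}_M) \cong \bigoplus_i \cone(\mathcal{P}_{M_i})$; running the same argument one step further on the triangles \eqref{eq: dist triangle 2} yields $L \cong \bigoplus_i L_i$, compatibly with the decomposition of $\cone(\mathcal{P}_M)$ already produced.

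Under these identifications the factorization $\gamma_M\colon K \to L[-1]$ is built from the same data as the individual $\gamma_{M_i}$, and so decomposes as $\bigoplus_i \gamma_{M_i}$. To upgrade this from an isomorphism to an equality, I would apply Corollary \ref{cor: gamma incl} to the canonical inclusions $M_i \hookrightarrow M$, which are morphisms of polarized Hodge modules once $\phi_M = \bigoplus_i \phi_{M_i}$ is fixed; the resulting commuting squares force the component of $\gamma_M$ on the summand $K_i$ to agree with $\gamma_{M_i}$. Since $\Perv(\Q_B)$ is closed under finite direct sums, $L = \bigoplus_i L_i$ is again perverse and $\gamma_M = \bigoplus_i \gamma_{M_i}$ is an isomorphism of perverse sheaves, as required.

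The step I expect to be the main obstacle is controlling the non-functoriality of cones: a priori $\cone(\mathcal{P}_M)$ and $L$ are defined only up to non-canonical isomorphism, so the identifications with the corresponding direct sums must be shown to be compatible with the factorizations $\gamma_{M_i}$ rather than arbitrary. This is exactly the role played by Corollary \ref{cor: gamma incl}, which promotes the bare existence of a direct-sum identification into the naturality needed to conclude $\gamma_M = \bigoplus_i \gamma_{M_i}$.
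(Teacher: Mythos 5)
Your proposal is correct and uses exactly the two ingredients the paper relies on: the paper gives no written-out proof, simply declaring the corollary an immediate consequence of Proposition \ref{prop: M decomp} and Corollary \ref{cor: gamma incl}, which are precisely the results you invoke. Your additional care about the non-functoriality of cones and the compatibility of the direct-sum identifications is a reasonable filling-in of details the authors left implicit, not a different route.
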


Saito's Structure Theorem  for polarized Hodge modules \cite{saito2} asserts that if $B$ is a complex algebraic variety, then every Hodge module $M\in HM(B,w)$ decomposes as a direct sum
\[M=\bigoplus_{Z\subset B}M_Z,\]
where $M_Z$ is a polarizable Hodge module of weight $w-\dim Z$ with strict support $Z$ obtained as $M_Z=j_{!*}V_Z$, where $V_Z$ is a polarized variation of Hodge structures on some open subset $j\colon U\hookrightarrow Z$ contained in the smooth locus of $Z$.

In particular, there exists a stratification $B=\bigsqcup B_i$ of $B$ into locally closed subvarieties such that 
\begin{equation}\label{eq: M decomp}M=\bigoplus_i M_{i},\end{equation}
where each $M_i=(j_{i})_{!*}V_{B_i}$ for $V_{B_i}$ a simple variation of Hodge structures on $B_i$ and $j_i\colon B_i\hookrightarrow \overline{B}_i$. 

\corStructure

\begin{proof}
This immediately follows from Theorem \ref{prop: min ext} applied to Corollary \ref{cor: gamma decomp}.
\end{proof}



\section{Example: The case when \texorpdfstring{$\dim B=1$}{dim B = 1}}\label{sec: 1 dim}

In this section, we focus our attention on the case of 
 a polarized Hodge module $M=(\mathcal{M}, F_\bullet \mathcal{M}, K, \phi)$ over a $1$-dimensional complex algebraic variety $B$.
We consider the Hodge module $M$ locally on a disk $\Delta\subset B$. Let $f\colon \Delta \rightarrow \mathbb{C}$ be a non-constant holomorphic function such that $f^{-1}(0)=\{0\}\in \Delta$. 

Recall that one may then apply the nearby cycles functor $\Psi_f$ for $f$ to the Hodge module $M$ to obtain a mixed Hodge module $\Psi_fM$ supported at the point $\{0\}$ whose underlying perverse sheaf and $\mathcal{D}$-module arise as the nearby cycles functor applied to the perverse sheaf and $\mathcal{D}$-module underlying the Hodge module $M$. See \cite[Section 8]{schnellnotes}, for instance, for more on this construction.

Let $T$ be the monodromy operator which acts on $\Psi_fM$. Then  the logarithm of the unipotent part of the monodromy $N_u=\frac{1}{2\pi i}T_u$ yields a a nilpotent endomorphism (up to Tate twist) of $\Psi_fM$. Hence $N_u$ induces a weight filtration $W(N_u)$ of $\Psi_fM$ such that for any $\ell \in \Z$, the graded pieces $\gr^{W(N_u)}_{\ell-w+1}\Psi_fM[-1]$ are pure Hodge modules supported at $\{0\}$.

\begin{prop}\label{prop: curve}
Suppose that $M$ is a polarized Hodge module  on a disk $\Delta\subset B$  such that $M$ is the minimal extension of a polarized variation of Hodge structures $V$ on $\Delta^*$. Then for every $\ell\in \Z$ and non-constant holomorphic function $f\colon \Delta\rightarrow \mathbb{C}$ such that $f^{-1}(0)=\{0\}$, 
\[\mathcal{P}_{\gr^{W(N_u)}_{\ell-w+1}\Psi_fM[-1]}=\mathcal{P}_{\gr^{W(N_u)}_{\ell-w+1}L_fV},\]
where $L_fV$ denotes the limit mixed Hodge structure in the sense of Deligne for the variation of Hodge structures $V$. 
\end{prop}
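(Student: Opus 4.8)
The plan is to show that the two period integral maps agree by identifying each of the ingredients in the definition \eqref{eq: P comp} of $\mathcal{P}_M$ for the pure Hodge modules $\gr^{W(N_u)}_{\ell-w+1}\Psi_fM[-1]$ and $\gr^{W(N_u)}_{\ell-w+1}L_fV$, and observing that they coincide because the nearby cycles of the minimal extension $M$ recover Deligne's limit mixed Hodge structure. The starting point is the fundamental compatibility, proved by Saito, that when $M=j_{!*}V$ is the minimal extension of a polarized variation of Hodge structures $V$ on $\Delta^*$, the mixed Hodge module $\Psi_fM$ agrees with the limit mixed Hodge structure $L_fV$ in the sense of Deligne, together with its monodromy weight filtration. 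In particular the underlying perverse sheaves, filtered $\mathcal{D}$-modules, monodromy operators $N_u$, weight filtrations $W(N_u)$, and polarizations all match, compatibly with the $\gr^{W(N_u)}$ operation. This is the content I would invoke first, citing \cite[Section 8]{schnellnotes}.

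Granting that identification, the proof becomes a matter of checking that the two pure Hodge modules $\gr^{W(N_u)}_{\ell-w+1}\Psi_fM[-1]$ and $\gr^{W(N_u)}_{\ell-w+1}L_fV$ are \emph{the same} polarized Hodge module, not merely isomorphic ones, so that their period integral maps are literally equal. Concretely, I would unwind the data $(\mathcal{M}', F_\bullet \mathcal{M}', K', \phi')$ attached to each graded piece, noting that the $\gr^{W(N_u)}$ construction produces identical perverse sheaves $K'$, identical filtered $\mathcal{D}$-modules (both supported at $\{0\}$), and identical induced polarizations $\phi'$ coming from the relative monodromy filtration machinery. Since $\mathcal{P}_{M'}$ is defined purely functorially out of $\sigma$, $\phi'_\C$, $\eta_{\mathcal{M}'}$, and $H(\iota)$—all of which depend only on this data—equality of the underlying polarized Hodge modules forces equality of the period integral maps. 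The shift $[-1]$ and the indexing $\ell-w+1$ of the weight filtration are bookkeeping that I would align carefully but which pose no conceptual difficulty.

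The main obstacle I anticipate is establishing the identification at the level of \emph{filtered} $\mathcal{D}$-modules with their \emph{polarizations}, rather than merely at the level of perverse sheaves or unfiltered objects. The statement that $\Psi_fM=L_fV$ is classical on the underlying local systems, but the claim used here is really the statement that Saito's nearby cycles functor on mixed Hodge modules, applied to the minimal extension, reconstructs Deligne's limit mixed Hodge structure together with the Hodge filtration $F_{-w}$ on the relevant filtered piece and the polarizations on the weight-graded pieces induced by $N_u$. I would lean on Saito's results \cite{saito}, \cite{saito2} for this compatibility, and on the fact that the period integral map construction in Definition \ref{def: PI map HM} is intrinsic to the polarized Hodge module so that no further choices intervene. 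Once the filtered-and-polarized identification of the graded pieces is in hand, the equality $\mathcal{P}_{\gr^{W(N_u)}_{\ell-w+1}\Psi_fM[-1]}=\mathcal{P}_{\gr^{W(N_u)}_{\ell-w+1}L_fV}$ is immediate from the definition.
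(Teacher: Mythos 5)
Your proposal is correct and follows essentially the same route as the paper: both arguments rest on the identification of $\Psi_fM$ with Deligne's limit mixed Hodge structure $L_fV$ as polarized, filtered objects, after which the $W(N_u)$-graded pieces are literally the same polarized pure Hodge structures supported at the point. The only cosmetic difference is that the paper closes by citing Proposition \ref{prop: VHS} to identify the Hodge-module period map of a point-supported module with the period map of the corresponding Hodge structure, whereas you appeal directly to the intrinsic nature of Definition \ref{def: PI map HM}; the content is the same.
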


\begin{proof}
Note that since the graded pieces $\gr^{W(N_u)}_{\ell-w+1}\Psi_fM$ are all pure Hodge modules supported on a point, they must correspond to Hodge structures. Moreover, 
 by design, the filtrations on $\Psi_fM$ guarantee that the mixed Hodge structure corresponding to $\Psi_fM$ is exactly Deligne's limit mixed Hodge structure $L_fV$. Since the ${W(N_u)}$-graded pieces of $L_fV$ are polarized pure Hodge structures, the result then follows from Proposition \ref{prop: VHS}. 
\end{proof}

\subsection{Example: the case of a family of curves}\label{sec: curves}

We follow here the exposition of \cite[Section 5.4]{hain} to recall some additional information in the case of a family of curves. Suppose that $\mathscr{C}\rightarrow \Delta$ is a stable degeneration of a family of smooth algebraic curves of genus $g$ such that $\mathscr{C}$ is smooth and all fibers $C_t$ are smooth for $t\ne 0$. Then the central fiber $C_0$ is reduced and stable, meaning $C_0$ has finite automorphism group. 

Consider the polarized variation of Hodge structures $V$ on $\Delta^*$ given by $t\mapsto H^1(C_t,\Q)$ and, fixing a non-constant holomorphic function $f\colon \Delta\rightarrow \C$ such that $f^{-1}(0)=\{0\}$, let $L_fV$ denote the limit mixed Hodge structure at $\{0\}$. 
Then the monodromy weight filtration on $L_fV$ has length $3$ and 
\[F^0L_fV=L_fV, \hskip.2in F^2L_fV=0.\]
Hence 
\[\gr^{W(N_u)}_{\ell}L_fV\]
is a polarized Hodge structure of type $(0,0)$ when $\ell=0$, of type $(1,1)$ when $\ell=2$, and of type $(1,0), (0,1)$ when $\ell =1$. 

Moreover, if $\tilde{C}_0$ is the normalization of $C_0$ with decomposition into irreducible components $\tilde{C}_0=\bigcup_i X_i$, there is a natural isomorphism of polarized Hodge structures
\[\gr^{W(N_u)}_{1}L_fV\cong H^1(\tilde{C}_0)\cong \bigoplus_{i}H^1(X_i).\]

Hence, using Proposition \ref{prop: curve} together with Proposition \ref{prop: VHS}, the period integrals of the Hodge module $M$ given by the minimal extension of $V$ at the point $\{0\}$ are given by
\[\mathcal{P}_{\gr^{W(N_u)}_{1}\Psi_fM[-1]}=\bigoplus_i\mathbb{P}_{H^1(X_i)}.\]

\subsection{Example: mildly singular degenerations}\label{sec: KL}

In the case of a $1$-dimensional family of smooth projective varieties degenerating to a mildly singular variety, recent results of Kerr--Laza \cite{KL} yield the following. 

\begin{prop}\label{prop: kerr-laza}
Let $\sigma\colon \mathscr{X}\rightarrow \Delta$ be a flat projective family of $n$-dimensional varieties over the disk, which is the restriction of an algebraic family over a curve $B$, such that f is smooth over $\Delta^*$. Suppose that $\mathscr{X}$ is normal and $\Q$-Gorenstein and that the special fiber $X_0=\sigma^{-1}(0)$ is reduced and semi-log canonical (slc). Let $M$ be the polarized Hodge module on $B$ obtained locally on $\Delta$ as the minimal extension of the polarized variation of Hodge structures $V$ on $\Delta^*$ given by $\mathbb{V}_t=H^n(X_t, \Q)$.

If $i\colon \{0\}\rightarrow \Delta$ is the natural inclusion and $f\colon \Delta\rightarrow \C$ is a non-constant holomorphic function such that $f^{-1}(0)=\{0\}$, then for all $\ell \in \mathbb{Z}$ the periods of the special fiber $X_0$ may be recovered as
\[\mathcal{P}_{\gr^{W(N_u)}_{\ell-w+1}i^*M}=
\mathcal{P}_{\gr^{W(N_u)}_{\ell-w+1}\Psi_fM[-1]}.\]
\end{prop}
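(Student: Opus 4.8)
The plan is to recognize that this statement, despite its geometric hypotheses, reduces to a comparison of period integral maps on pure Hodge structures supported at the point $\{0\}$, and that the real content is supplied externally by Kerr--Laza \cite[Theorem 1.2]{KL}. First I would observe that both $\gr^{W(N_u)}_{\ell-w+1}i^*M$ and $\gr^{W(N_u)}_{\ell-w+1}\Psi_fM[-1]$ are pure Hodge modules supported on the point $\{0\}$, hence correspond to pure polarized Hodge structures. By Proposition \ref{prop: curve}, the right-hand side $\mathcal{P}_{\gr^{W(N_u)}_{\ell-w+1}\Psi_fM[-1]}$ equals $\mathcal{P}_{\gr^{W(N_u)}_{\ell-w+1}L_fV}$, the period map attached to the corresponding graded piece of Deligne's limit mixed Hodge structure, which by Proposition \ref{prop: VHS} is just $\mathbb{P}$ of that Hodge structure (shifted appropriately). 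So it suffices to identify the left-hand side with the same object.

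The key step is to apply the Kerr--Laza hypothesis. Under the slc and $\Q$-Gorenstein assumptions on $\mathscr{X}$ and the special fiber $X_0$, their \cite[Theorem 1.2]{KL} relates the first graded piece of the Hodge filtration on the limit mixed Hodge structure $L_fV$ to the cohomology of the singular fiber $X_0$ itself; concretely it produces a polarized morphism of Hodge structures comparing $\gr^{W(N_u)}_{\ell-w+1}i^*M$ and $\gr^{W(N_u)}_{\ell-w+1}\Psi_fM[-1]$ on the relevant filtered piece $F_{-w}$. I would then invoke Proposition \ref{prop: inclusion}, which guarantees that $\mathcal{P}_M$ is compatible with polarized morphisms of Hodge modules: the commuting square in \eqref{dia: incl} forces the two period integral maps to agree once the underlying Hodge structures and their $F_{-w}$-pieces are identified via the Kerr--Laza comparison. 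Because the period integral map $\mathcal{P}_M$ depends on $M$ only through the perverse sheaf $K$, the polarization $\phi$, and the filtered piece $F_{-w}\mathcal{M}$, matching these three pieces of data on both sides yields the claimed equality of period maps for every $\ell\in\mathbb{Z}$.

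The main obstacle I anticipate is the precise bookkeeping in translating Kerr--Laza's cohomological statement into a genuine \emph{polarized morphism of Hodge modules} to which Proposition \ref{prop: inclusion} applies. Their result is phrased in terms of the Hodge-theoretic data of $X_0$ and the limit mixed Hodge structure, so the delicate point is verifying that the identification it provides is not merely an abstract isomorphism of Hodge structures but is compatible with the filtration $F_{-w}\mathcal{M}$ and the polarization in the way Proposition \ref{prop: inclusion} requires; the weight-index shift $\ell - w + 1$ must be tracked carefully so that the graded pieces on the two sides land in the same weight. Once that compatibility is pinned down—using that the graded pieces are pure, hence that morphisms of Hodge modules supported at a point are morphisms of polarized Hodge structures—the equality of the two period integral maps follows formally from Propositions \ref{prop: VHS}, \ref{prop: curve}, and \ref{prop: inclusion}.
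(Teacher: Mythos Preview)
Your approach is essentially the same as the paper's: reduce to pure Hodge structures at the point, invoke Kerr--Laza for the Hodge-filtration comparison, and then apply Proposition~\ref{prop: inclusion} to a polarized morphism between the two graded pieces. Two small clarifications will close the gaps you flagged. First, the polarized morphism to which Proposition~\ref{prop: inclusion} is applied is not something Kerr--Laza produces; it is the standard specialization inclusion $\gr^{W(N_u)}_{\ell} i^*M[-1]\hookrightarrow \gr^{W(N_u)}_{\ell}\Psi_f M[-1]$, which exists for general reasons. Second, Kerr--Laza's theorem gives an isomorphism on $\gr_F^0$, not directly on $F_{-w}$; the paper passes from $\gr_F^0$ to $F^n$ on each weight-graded piece via complex conjugation (using purity), and then rewrites this as an isomorphism on $F_{-n}$ of the underlying $\mathcal{D}$-modules. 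With those two points made explicit, the commuting square of Proposition~\ref{prop: inclusion} has an isomorphism on its bottom row, forcing the equality of period maps.
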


\begin{proof}
It follows from \cite[Theorem 1.2]{KL} that under the given hypotheses we have an isomorphism of Hodge structures
\[
\gr_{F}^{0} H^n(X_0,\Q)\cong\gr_{F}^{0}L_fV,
\]
where $L_fV$ denotes the limit mixed Hodge structure. Furthermore, using the compatibility between the weight and Hodge filtrations for the above mixed Hodge structures, we get that for any $\ell \in \mathbb{Z}$ the relation on both $H^n(X_0,\Q)$ and $L_fV$
\begin{equation}
    \begin{aligned}
 \gr^{W(N_u)}_\ell \gr_{F}^{0}&=\gr_{F}^{0}\gr^{W(N_u)}_\ell\\
 &\cong  F^{n}\gr^{W(N_u)}_\ell, 
 \end{aligned}
 \end{equation}
 where the second isomorphism is obtained via complex conjugation. 
 Namely, we have for any $\ell \in \mathbb{Z}$
 \[
F^{n}\gr^{W(N_u)}_\ell  H^n(X_0,\Q)\cong F^{n}\gr^{W(N_u)}_\ell L_fV.
\]
Passing to the corresponding polarized Hodge modules, we get
\begin{equation}\label{eq: filtered iso}
F_{-n}\gr^{W(N_u)}_\ell  i^*M\cong F_{-n}\gr^{W(N_u)}_\ell \Psi_fM,
\end{equation}
where recall that the Hodge module $M$ has weight $n+1$. 

Recall, moreover, that for any $\ell \in \mathbb{Z}$, there is a polarized inclusion of pure Hodge modules
\[
 \gr^{W(N_u)}_\ell i^*M[-1]\hookrightarrow \gr^{W(N_u)}_\ell\Psi_f M[-1] ,
\]
where the shift by $[-1]$ is included to ensure perversity of the underlying rational structure. 
Then Proposition \ref{prop: inclusion} applied to this inclusion together with the isomorphism \eqref{eq: filtered iso} implies  the result. 

\end{proof}

\section{Example: Hypersurfaces}\label{sec: schnell}
Let $X$ be a complex projective space of dimension $n$ and fix an ample line bundle $\mathcal{O}_X(1)$ on $X$ with the property that $H^q(X, \Omega_X^p(k))=0$ for every $p\ge 0$ and every $q,k>0$. Consider the family of hypersurfaces in $X$ determined by the $d$-dimensional the linear system $B=|\mathcal{O}_X(1)|$.  

Let $\mathscr{X}\subset X\times B$ denote the incidence variety and consider the projection map $\pi\colon \mathscr{X}\rightarrow B$. 
We may then consider the projection map 
$\pi^{\mathrm{sm}}\colon\mathscr{X}^{\mathrm{sm}}\rightarrow B^{\mathrm{sm}}$ 
given by restricting $\pi$ to the open subset $B^{\mathrm{sm}}$ over which $\pi$ is smooth.

Recall that if $D\subset X$ is a smooth hypersurface, the \emph{vanishing cohomology} is 
\[H^{n-1}_{\mathrm{ev}}(D,\Q)=\ker (i_*\colon H^{n-1}(D,\Q)\rightarrow H^{n+1}(X,\Q)),\]
where $i_*$ is the Gysin map for the inclusion $i\colon D\hookrightarrow X$. We thus consider the variation of Hodge structures on $B^{\mathrm{sm}}$ defined by the $\Q$-local system
\[\mathbb{V}=R^{n-1}_{\mathrm{ev}}\pi^{\mathrm{sm}}_*\Q=\ker(R^{n-1}_{\mathrm{ev}}\pi_*^{\mathrm{sm}}\Q\rightarrow H^{n+1}(X,\Q)),\]
whose fibers on points $b\in B^{\mathrm{sm}}$ are $H^{n-1}_{\mathrm{ev}}(\pi^{-1}(b),\Q)$.

Let $M=(\mathcal{M}, F_\bullet \mathcal{M}, K, \phi)$ be the polarized Hodge module on $B$ which is the minimal extension of $\mathbb{V}$. Then it follows from \cite[Theorem D (1)]{schnell2012residues} that the filtered piece $F_{-w}\mathcal{M}$ of the $\mathcal{D}_B$-module $\mathcal{M}$ is an ample vector bundle. In particular, we have an identification by taking cohomology
\[R\mathcal{H}om_{\cO}(\omega_B^{-1}\otimes_{\mathcal{O}_B}F_{-w} \cM,\cO_B)[d] \cong \mathcal{H}om_{\cO}(\omega_B^{-1}\otimes_{\mathcal{O}_B}F_{-w} \cM,\cO_B)[d].\]
Hence the period integral map $\mathcal{P}_M$  is in fact just of the form
\[\mathcal{P}_M\colon K \rightarrow \mathcal{H}om_{\cO}(\omega_B^{-1}\otimes_{\mathcal{O}_B}F_{-w} \cM,\cO_B)[d].\]
Therefore  $\mathcal{P}_M$
induces, as in the Hodge structure case, a \emph{period pairing}
\[K\otimes_\Q (\omega_B^{-1}\otimes_{\mathcal{O}_B}F_{-w} \cM)\rightarrow \cO_B[d]\]
whose image encodes the period integrals for the entire family of hypersurfaces determined by $|\mathcal{O}_X(1)|$, not just the smooth ones.

\textbf{Acknowledgements.} The authors would like to thank Yajnaseni Dutta, Bong Lian, Christian Schnell, and Lei Wu for helpful conversations. The first author was supported by the 
  National Science Foundation under Grant DMS-1803082 and Grant DMS-1645877 as well as under Grant DMS-1440140, while in residence at the Mathematical Sciences Research Institute in Berkeley, California during the Spring 2019 semester.  The second author was supported by the National Science Foundation under Grant DMS-1503050 and Grant DMS-1645877.

\bibliographystyle{amsalpha}
\bibliography{refs}

\end{document}